\newtheorem{lemma}{Lemma}[section]
\newtheorem{theorem}[lemma]{Theorem}
\newtheorem{conj}[lemma]{Conjecture}
\newtheorem{claim*}{Claim}
\theoremstyle{remark}
\newtheorem{remark}[lemma]{Remark}
\newcommand{\HH}{{\mathbb H}}  % upper half plane
\newcommand{\PP}{{\mathbb P}}
\newcommand{\C}{{\mathbb C}}
\newcommand{\F}{{\mathbb F}}
\newcommand{\Q}{{\mathbb Q}}
\newcommand{\Z}{{\mathbb Z}}
\newcommand{\M}{\mathbb{M}}
\newcommand{\B}{\mathbb{B}}
\renewcommand{\S}{\mathbb{S}}
\newcommand{\T}{{\mathbb{T}}}   % hecke algebra
\newcommand{\A}{{\mathcal A}}
\newcommand{\cuspidal}{\mathcal{C}}
\DeclareMathOperator{\Frob}{Frob}
\DeclareMathOperator{\im}{im}
\DeclareMathOperator{\End}{End}
\DeclareMathOperator{\Gal}{Gal}
\DeclareMathOperator{\Div}{Div}
\DeclareMathOperator{\tors}{tors}
\DeclareMathOperator{\Ver}{Ver}
\DeclareMathOperator{\Princ}{Princ}
\DeclareMathOperator{\SL}{SL}
\DeclareMathOperator{\hol}{hol}
\newcommand{\bs}{\backslash} % for quotient groups action
\newcommand{\diamondop}[1]{\langle #1 \rangle} % diamond operator
\renewcommand{\Im}{\operatorname{Im}}
\newcommand{\mtwo}[4]   {\left[\begin{matrix}\hfill{}#1&\hfill{}#2\\\hfill{}#3&\hfill{}#4\end{matrix}\right]}  % normal matrices 
\newcommand{\smat}[4]{{\left[\begin{smallmatrix} #1 & #2 \\ #3 & #4 \end{smallmatrix}\right]}} % small matrices
\numberwithin{equation}{section}
\numberwithin{table}{section}
\newcommand{\defi}[1]{\textsf{#1}} % for defined terms
\newcommand{\cd}{\cdot\allowbreak}
\title{On Some Open Cases \\ of a Conjecture of Conrad, Edixhoven and Stein}
\author{Davide De Leo}
\address{Dipartimento di Matematica e Informatica, Università della Calabria, Ponte Pietro Bucci, Cubo 31B, 87036 Arcavacata di Rende (CS), Italy}
\email{davide.deleo@unical.it}
\author{Michael Stoll}
\address{Mathematisches Institut, Universit\"at Bayreuth, 95440 Bayreuth, Germany}
\email{Michael.Stoll@uni-bayreuth.de}
\urladdr{\url{https://www.mathe2.uni-bayreuth.de/stoll/}}
\date{\today}
\subjclass{}
\keywords{modular Jacobian variety, rational torsion subgroup, cuspidal divisor class group \\ The first author is a member of INdAM--GNSAGA}
\begin{document}

%%%%%%%%%%%%%%%%%%%%%%%%%%%%%%%%%%%%%%%%%%%%%%%%%%%%%%%%%%%%%%%%%%%%%%%%%%%%
    \begin{abstract}
Let \( p \geq 5 \) be a prime.~In 2003 Conrad, Edixhoven, and Stein conjectured that the rational torsion subgroup of the modular Jacobian \( J_1(p) \) coincides with the rational cuspidal divisor class group. Using explicit computations in Magma, the open case \( p = 29 \) has been proven by Derickx, Kamienny, Stein, and Stoll in 2023. We extend these results to primes \( p = 97, 101, 109, \) and \( 113 \). In addition, we provide a list of the groups \( J_1(p)(\mathbb{Q})_{\text{tors}} \) for every prime up to \( p \leq 113 \). However, our method is general and can be applied to larger primes.
\end{abstract}
%%%%%%%%%%%%%%%%%%%%%%%%%%%%%%%%%%%%%%%%%%%%%%%%%%%%%%%
%%%%%%%%%%%%%%%%%%%%

	\maketitle

%%%%%%%%%%%%%%%%%%%%%%%%%%%%%%%%%%%%%%%%%%%%%%%%%%%%%%%%%%%%%%%%%%%%%%%%%%%%%%%%
\section{Introduction}
%%%%%%%%%%%%%%%%%%%%%%%%%%%%%%%%%%%%%%%%%%%%%%%%%%%%%%%%%%%%%%%%%%%%%%%%%%%%%%%%
\noindent Let \( k \) be an algebraic number field and let \(\A\) be an abelian variety over \( k \). By the Mordell--Weil Theorem, the set \(\A(k)\) of \( k \)-rational points of \(\A\) is a finitely generated abelian group:
\[
\A(k) = \Z^r \oplus \A(k)_{\tors},
\]
where \( r \) is a nonnegative integer called the (arithmetic) \emph{rank} of \(\A\), and \(\A(k)_{\tors}\) is a finite group, known as the \( k \)-\emph{rational torsion subgroup} of \(\A\).

In general, determining the rank of a given abelian variety is a challenging problem (even in small dimensions). In contrast, in this paper we address the more ``affordable'' problem of finding, and possibly computing, the \(\Q\)-rational (or simply rational) torsion subgroup \(\A(\Q)_{\tors}\) for certain abelian varieties.

The simplest examples of abelian varieties that one can readily construct are the Jacobian varieties of curves. Let \( X \) be a smooth projective algebraic curve of genus \( g \geq 1\) defined over a field \( k \), and assume that $X$ admits at least one $k$-rational point. Recall that the Jacobian variety \( J(X) \) of \( X \) is an abelian variety over \( k \) of dimension \( g \); that is, \( J(X) \) is a commutative projective algebraic group of dimension \( g \) whose underlying abstract group is given by
\[
J(X)(k) \cong \Div^0(X) / \Princ(X),
\]
where \(\Div^0(X)\) denotes the group of degree-zero divisors on \( X \) and \(\Princ(X)\) is the group of principal divisors. By fixing a \( k \)-rational point \( Q \) on \( X \), one can define the Abel--Jacobi map
\[
\iota_Q : X(k) \rightarrow J(X)(k), \quad P \longmapsto [P-Q],
\]
which is injective and realizes \( X \) as a subvariety of \( J(X) \). This provides a useful  embedding that allows us to deduce properties of \( X \) by studying its corresponding Jacobian, which typically exhibits a far richer structure.

In the case \( g = 1 \), we have \( X = E \) is an elliptic curve and, consequently, \(\A = J(X) = E\). Over \(\Q\), the problem of classifying the rational torsion subgroup has been completely resolved. In his celebrated 1977 paper~\cite{mazur:eisenstein}, Mazur completely classified (up to isomorphism) all possible structures of the \(\Q\)-rational torsion subgroup in the case of elliptic curves, and many algorithms (e.g.~Nagell--Lutz theorem~\cite{Sil:AEC}*{VIII.7.2}) exist for computing the torsion subgroup in practice. Over a general number field \( k \), however, the problem is more subtle and remains an active area of research; a vast literature on the subject is available; see, e.g., \cite{DKKS} and the references given there.

For $ g > 1$, as one might expect, the problem becomes considerably more difficult. Therefore, as indicated earlier, we restrict our attention to the rational torsion subgroup of Jacobians of certain specific algebraic curves known as modular curves, which will be discussed in greater detail in Section~\ref{sec:preliminaries}.

Let $\Gamma\subseteq\SL_2(\Z)$ be a congruence subgroup, and consider the modular curve $X(\Gamma)$ and its associated (modular) Jacobian variety $J(\Gamma)=J(X(\Gamma))$. 

On \( X(\Gamma) \), we can consider the free abelian group \( \tilde\cuspidal(X(\Gamma))\) of degree-zero divisors supported on the cusps. Its image in the Jacobian under the Abel--Jacobi map is called the \emph{cuspidal} $($\emph{divisor class}$)$ \emph{subgroup} of $J(\Gamma)$. This subgroup plays a special role in the study of the torsion of modular Jacobians. It was shown by Manin~\cites{manin:parabolic} and Drinfeld~\cites{drinfeld} that its order is finite. If \(\cuspidal(X(\Gamma))^{\Gal_{\Q}}\) denotes the \emph{rational cuspidal subgroup}, i.e.~the subgroup of \(\cuspidal(X(\Gamma))\) that is invariant under the action of the absolute Galois group over $\Q$, we have the inclusion:
\begin{equation} \label{eq:cusp-jac_inclusion}
    \cuspidal(X(\Gamma))^{\Gal_{\Q}}\subseteq J(\Gamma)(\Q)_{\tors}.
\end{equation}

In order to further investigate the problem, we first need to better understand the structure of the rational cuspidal subgroup. Naively, one might consider the subgroup of \(\tilde\cuspidal(X(\Gamma))\) generated by differences of rational cusps and then take its image in the Jacobian, denoted by \(\cuspidal(X(\Gamma))^\Q\). However, it can happen that differences of non-rational cusps yield rational divisor classes. Thus, a priori, this only provides the inclusion
\begin{equation}    \label{equation:diff_cusps-rat_cusps_inclusion}
\cuspidal(X(\Gamma))^\Q \subseteq \cuspidal(X(\Gamma))^{\Gal_{\Q}}.  
\end{equation}
Nevertheless, for prime level it is likely that this inclusion is an equality, since the non-rational cusps typically form one rather large Galois orbit so that, in the subgroup fixed by the Galois action, they always have to show up together. This is indeed the case for \(\Gamma = \Gamma_0(p)\), and we will show that it also holds for \(\Gamma = \Gamma_1(p)\) in some cases.

\begin{remark}
If the congruence subgroup $\Gamma$ is taken too generally and no additional information is given, then the inclusion in~\eqref{equation:diff_cusps-rat_cusps_inclusion} can fail to be an equality. To see this, we present the following example taken from the~\href{https://www.lmfdb.org/}{LMFDB}. Consider the modular curve with label~\href{https://beta.lmfdb.org/ModularCurve/Q/7.168.3.a.1/}{7.168.3.a.1}, which we denote by $X$. It is a genus 3 plane quartic with no rational cusps (hence $\cuspidal(X)^{\Q}$ is trivial), but it does have two Galois orbits of length three. The curve $X$ has $\Q$-gonality $\gamma$ between 3 and 4, so it is not hyperelliptic and one checks via the canonical model on the LFMDB that $X$ has no rational points (since no $\Q_p$-points for $p=2$). By the Riemann--Roch theorem, it follows that such a curve cannot have $\Q$-gonality 3 (since otherwise it should have a rational point), so that $\gamma$ is actually 4. This implies that the difference of the two orbits of length 3 cannot vanish on the Jacobian (otherwise one would obtain a rational function on the curve with exactly 3 zeros and 3 poles, which gives a $\Q$-rational morphism $X\rightarrow\PP^1$ of degree 3) and gives a nontrivial element in $\cuspidal(X)^{\Gal_\Q}$.

\end{remark}

\noindent One may wonder whether \eqref{eq:cusp-jac_inclusion} is an equality: 
\begin{itemize}
\item For prime level $p$ and $\Gamma=\Gamma_0(p)$, Ogg first determined the structure of the rational cuspidal subgroup in~\cite{ogg1} and then conjectured its equality with the whole rational torsion of $J_0(p)$ in~\cite{ogg2}. This result was later proved by Mazur~\cite{mazur:eisenstein}. For composite level $N$, the problem remains open and is known as the \defi{Generalized Ogg's Conjecture}. Nevertheless, some results under additional assumptions on $N$ have been obtained (see~\cites{lorenzini,ohta2,ribet:wake,Yoo1,Yoo2} and the references therein for additional information).
\item For prime level $p$ and $\Gamma=\Gamma_1(p)$, in 2003 Conrad, Edixhoven and Stein presented the following~(\cite{CES}*{6.2.2.}) 
\end{itemize}
\begin{conj}    \label{conj:ces}
Let $p\geq5$ be a prime. The $\Q$-rational torsion subgroup of \(J_1(p)\) is generated by differences of $\Q$-rational cusps on $X_1(p)$.
\end{conj}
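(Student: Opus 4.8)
The plan is to prove the conjecture for each prime $p$ individually by sandwiching $J_1(p)(\Q)_{\tors}$ between an explicit lower bound and an explicit upper bound and verifying that the two coincide as abelian groups. The lower bound is already in hand: combining \eqref{eq:cusp-jac_inclusion} with \eqref{equation:diff_cusps-rat_cusps_inclusion} gives $\cuspidal(\Gamma_1(p))^{\Q}\subseteq J_1(p)(\Q)_{\tors}$, so it suffices to compute $\cuspidal(\Gamma_1(p))^{\Q}$ exactly and then show that $J_1(p)(\Q)_{\tors}$ is no larger. The cusps of $X_1(p)$ consist of $(p-1)/2$ rational cusps together with $(p-1)/2$ cusps defined over the real cyclotomic field $\Q(\zeta_p)^+$, which form essentially one Galois orbit; the subgroup of $J_1(p)$ generated by the differences of the rational cusps is $\cuspidal(\Gamma_1(p))^{\Q}$, and its order and group structure can be read off from the divisors of the modular units on $X_1(p)$ (their span being finite by Manin and Drinfeld). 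This is a finite linear-algebra computation that Magma can carry out.

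For the upper bound I would exploit that $J_1(p)(\Q)_{\tors}$ is a module over the Hecke algebra $\T$ and split the problem according to the maximal ideals $\mathfrak m\subseteq\T$. If $\mathfrak m$ is \emph{not} Eisenstein, the associated residual Galois representation $\overline{\rho}_{\mathfrak m}$ is irreducible, so $J_1(p)[\mathfrak m]^{\Gal_{\Q}}=0$ and the $\mathfrak m$-localization of the rational torsion vanishes. Consequently all rational torsion is supported at Eisenstein maximal ideals, and the task reduces to bounding the Eisenstein part. Here I would use reduction: for every prime $\ell\neq p$ the variety $J_1(p)$ has good reduction at $\ell$, and reduction is injective on prime-to-$\ell$ torsion, so
\[
J_1(p)(\Q)_{\tors}[\ell']\hookrightarrow J_1(p)(\F_\ell),
\]
where $[\ell']$ denotes the prime-to-$\ell$ part. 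The order $\#J_1(p)(\F_\ell)$ is the value at $1$ of the characteristic polynomial of $\Frob_\ell$, assembled from Hecke eigenvalues via modular symbols; running over several auxiliary $\ell$ and intersecting the constraints (comparing not just orders but group structures, localized at each Eisenstein prime) pins down an upper bound for $\#J_1(p)(\Q)_{\tors}$.

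The hard part is the prime $2$. At $\ell$ the injection discards the $\ell$-part, so no single reduction controls its own residue characteristic, and the $2$-primary part of $J_1(p)(\F_\ell)$ tends to be large for every odd $\ell$; the resulting $\gcd$ over the $\ell$'s need not be tight, leaving a potential gap between the reduction bound and the cuspidal group exactly at $2$. Moreover the Eisenstein/non-Eisenstein dichotomy above is delicate in residue characteristic $2$, where the irreducibility of $\overline{\rho}_{\mathfrak m}$ can fail or be harder to certify. I therefore expect to need a dedicated analysis of the $2$-primary torsion for each $p$---for instance a direct determination of $J_1(p)(\Q)[2^\infty]$ as a Galois module, or an explicit study of the relevant Eisenstein quotient---supplemented by Mazur--Eisenstein-style arguments at the prime $p$ itself. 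A secondary, purely practical obstacle is the growth of $g=\dim J_1(p)$ with $p$: for $p=97,101,109,113$ the modular symbol spaces are large, so the computations of the cuspidal lattice and of the orders $\#J_1(p)(\F_\ell)$ must be organized efficiently, e.g.\ by decomposing along the diamond-operator action and over Galois orbits of newforms. Once the upper and lower bounds agree as groups, Conjecture~\ref{conj:ces} follows for that $p$.
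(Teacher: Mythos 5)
Your overall sandwich strategy---exact computation of $\cuspidal(\Gamma_1(p))^{\Q}$ as a lower bound, an annihilation-based upper bound, and a machine comparison of the two---matches the paper's, and your lower bound is sound. The genuine gap is in your upper-bound mechanism, and it sits exactly where you yourself locate it: the $2$-primary part. Your reduction step only yields the \emph{order} $\#J_1(p)(\F_\ell)$ as the value of the characteristic polynomial of $\Frob_\ell$ at $1$; the ``comparing group structures'' refinement you invoke is not computable from this data (the isomorphism type of $J_1(p)(\F_\ell)$ is not determined by the characteristic polynomial, and at genus several hundred no direct computation of it is feasible), so you are left with a gcd-of-orders bound that is notoriously slack at $2$---fatal here, since for the four primes at issue the torsion has large $2$-parts (for $p=113$ the invariants begin $2^2,\,2^2,\,2^4,\,2^4,\,2^4\cdot 13,\dots$). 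Your Eisenstein/non-Eisenstein dichotomy does not rescue this: killing the non-Eisenstein part via irreducibility of $\overline{\rho}_{\mathfrak m}$ is essentially the content of Ohta's theorem, which already establishes the conjecture up to $2$-torsion, so your plan reproduces what is known away from $2$ and defers the actual open content (the $2$-part) to an unspecified ``dedicated analysis'' of $J_1(p)(\Q)[2^\infty]$ for which you give no workable method.

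The missing idea, which is the paper's key step, is to lift the Frobenius fixed-point condition to characteristic zero rather than merely counting points: by the Eichler--Shimura relation, for any odd prime $q\neq p$ the endomorphism $\eta_q=T_q-\langle q\rangle-q$ of $J_1(p)$ over $\Q$ annihilates \emph{all} of $J_1(p)(\Q)_{\tors}$, including its $2$-part, because reduction modulo an odd $q$ is injective on the full rational torsion (your restriction to prime-to-$\ell$ torsion is an unnecessary weakening when $\ell$ is odd). Since $J_1(p)(\C)_{\tors}\cong V/H$ with $V=H_1(X_1(p)(\C),\Q)$ and $H=H_1(X_1(p)(\C),\Z)$, the joint kernel $M'(p)$ of $\eta_q$ and $\iota^*-1$ on $V/H$ is an explicitly computable finite abelian group containing the rational torsion, with exact invariants---no gcd slack and no separate treatment of $2$ needed. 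The paper then checks $M'(p)\subseteq\cuspidal(p)$ (kernels of $\eta_q$ on divisors are supported on cusps, by the cited result of Derickx) and cuts down to the Galois-invariant part using the explicit diamond-operator action on the cusps. Without this, or an equivalent replacement for your point-counting bound, your argument does not close at the prime $2$ for $p=97,101,109,113$.
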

In their work, the authors provided strong numerical evidence proving Conjecture~\ref{conj:ces} for every prime \(p \leq 157\), except for the cases $p=29, 97,101,109$ and $113$. Later, in~\cite{Ohta1} Ohta proved a significant part of the conjecture, showing its validity up to the $2$-torsion. However, both approaches verify the conjecture only indirectly, without actually computing the groups involved. Consequently, although they yield positive results in some cases, the answer remains somewhat incomplete, since no explicit information about the subgroup structure is provided.

A more constructive approach to the conjecture can be found in~\cite{DKKS}*{Theorem~3.2}. Some of the computational methods employed in that work enabled the authors to describe the explicit structure of the rational torsion subgroup of $J_1(29)(\Q)_{\tors}$, thereby confirming Conjecture~\ref{conj:ces} for the smallest open case. 

In this direction, we extend these techniques to the remaining smallest unresolved primes. More precisely, we prove the following result:
\begin{theorem} \label{theorem:CESholds}
We have
\begin{equation}    \label{equation:ces}
    \cuspidal_1(p)^{\Q}:=\cuspidal(X_1(p))^{\Q}=J_1(p)(\Q)_{\tors},\quad\text{for }p=97,101,109,113.
\end{equation}
\end{theorem}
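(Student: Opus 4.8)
The plan is to prove equality in the chain
\[
\cuspidal_1(p)^{\Q} \subseteq \cuspidal(\Gamma_1(p))^{\Gal_{\Q}} \subseteq J_1(p)(\Q)_{\tors} \subseteq J_1(p)(\Q)
\]
by squeezing the two ends together. The strategy is to compute a lower bound on the order of the leftmost group $\cuspidal_1(p)^{\Q}$ and an upper bound on the order of $J_1(p)(\Q)_{\tors}$, and then to verify that the two bounds agree for each of the four primes $p = 97, 101, 109, 113$. Since every group in the chain is finite (by Manin and Drinfeld for the cuspidal group, and by Mordell--Weil for the torsion subgroup), and since $J_1(p)(\Q)$ has rank contribution only through its torsion for the relevant statement, matching orders forces equality throughout.

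First I would compute the rational cuspidal divisor class group $\cuspidal_1(p)^{\Q}$ explicitly. On $X_1(p)$ the cusps split into two Galois orbits, and the $\Q$-rational cusps are known; their differences generate a subgroup whose structure can be determined by computing the relevant modular symbols / degeneracy relations, i.e.\ by evaluating the divisor class group relations coming from modular units. This is a finite linear-algebra computation over $\Z$ (Smith normal form of the matrix of relations among cuspidal divisor classes), and it produces both the order and the group structure of $\cuspidal_1(p)^{\Q}$. This yields the lower bound: $\#\cuspidal_1(p)^{\Q}$ divides $\#J_1(p)(\Q)_{\tors}$ by \eqref{eq:cusp-jac_inclusion} and \eqref{equation:diff_cusps-rat_cusps_inclusion}.

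Next I would bound $\#J_1(p)(\Q)_{\tors}$ from above. The standard tool is reduction modulo auxiliary primes $\ell \neq p$: the torsion subgroup injects into $J_1(p)(\F_\ell)$, so $\#J_1(p)(\Q)_{\tors}$ divides $\gcd_\ell \#J_1(p)(\F_\ell)$, and the orders $\#J_1(p)(\F_\ell)$ are read off from the characteristic polynomials of Frobenius acting on $H^1$ (equivalently from Hecke eigenvalues / the action of $T_\ell$ on the space of weight-$2$ cusp forms for $\Gamma_1(p)$). Taking the gcd over several well-chosen small primes $\ell$ of good reduction drives this upper bound down. Following the method of \cite{DKKS}, one refines this: where the naive gcd still exceeds the cuspidal order, one analyzes the prime-by-prime components, using the Eisenstein ideal and the known action of Hecke operators to rule out extra torsion, and one treats the potentially problematic small primes (especially $\ell = 2$, in light of Ohta's result which already settles the odd part) separately.

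The main obstacle I expect is the $2$-primary part. Ohta's theorem establishes the conjecture up to $2$-torsion, so the odd part of the chain collapses by general theory; the real work is to show there is no spurious rational $2$-torsion beyond what the $\Q$-rational cusps already provide. Here the Frobenius / gcd bound is typically not sharp, because $\#J_1(p)(\F_\ell)$ is often even for structural reasons, so one must instead analyze the Galois module $J_1(p)[2]$ directly and match its rational part against the explicitly computed $2$-part of $\cuspidal_1(p)^{\Q}$. This is precisely the delicate point handled computationally in Magma: verifying for each of $p = 97, 101, 109, 113$ that the upper bound on $\#J_1(p)(\Q)_{\tors}[2^\infty]$ meets the cuspidal lower bound. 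Once the two bounds coincide for each prime, all inclusions in the chain are equalities, which gives \eqref{equation:ces} and, in particular, confirms Conjecture~\ref{conj:ces} in these four cases.
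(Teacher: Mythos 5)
Your skeleton (cuspidal lower bound, Hecke/Frobenius upper bound, squeeze) is the right family of ideas, but your closing step --- ``verify that the two bounds agree'' --- is precisely what fails for these primes, and this failure is the reason $p=29,97,101,109,113$ were left open in \cite{CES} in the first place: order bounds of the Frobenius/characteristic-polynomial type you propose are what the numerical verification in \cite{CES} already relied on. The paper's own data makes the obstruction quantitative. Its upper bound is not $\gcd_{\ell}\#J_1(p)(\F_{\ell})$ but the finer group $M'(p)=\ker(\eta_q)\cap\ker(\iota^*-1)$ acting on $M(p)=H_1(X_1(p)(\C),\Q)/H_1(X_1(p)(\C),\Z)$, where $\eta_q=T_q-\diamondop{q}-q$; and even this refined bound \emph{strictly} exceeds $\#\cuspidal(p)^{\Gal_{\Q}}$, by a factor of $2$ for $p=97$ (Table~\ref{tab:p97}), of $2^{2}\cdot 3^{3}$ for $p=109$ (Table~\ref{tab:p109}), and of $2^{6}$ for $p=113$ (Table~\ref{tab:p113}). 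Note in particular that at $p=109$ the excess has a nontrivial odd part, so the mismatch is not confined to the prime $2$ as your proposal assumes; and invoking Ohta does not repair this as stated, since Ohta compares $J_1(p)(\Q)_{\tors}$ with $\cuspidal(p)^{\Gal_{\Q}}$, whereas the theorem concerns $\cuspidal_1(p)^{\Q}$, so the odd part of Conjecture~\ref{conj:eq_diff_cusp-rat_cusp} would still need separate verification. For the $2$-part, your fallback --- ``analyze the Galois module $J_1(p)[2]$ directly'' --- names no workable mechanism: here $X_1(p)$ has genus between $345$ and $477$, so $J_1(p)[2]$ has $\F_2$-dimension up to $954$, and Frobenius data modulo $2$ reproduces exactly the structural even overcounts you are trying to remove.

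The missing idea, and the actual content of the paper's proof, is to replace order-matching by a containment in the cuspidal group. By Theorem~\ref{Tq_ker}, the operators $\eta_q$ (for \emph{odd} $q\ne p$) and $\iota^*-1$ annihilate \emph{all} of $J_1(p)(\Q)_{\tors}$, including its $2$-part, giving $J_1(p)(\Q)_{\tors}\subseteq M'(p)$; then Derickx's theorem (\cite{DKKS}*{Prop.~2.4}), which says that the kernel of $\eta_q$ viewed as a correspondence on divisors of $X_1(p)$ is supported on the cusps, is used to verify computationally that $M'(p)\subseteq\cuspidal(p)$, the image of the \emph{full} cuspidal group rather than only its rational part. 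Since rational torsion is Galois-fixed and the Galois action on the cusps is explicitly known (diamond operators: the $(p-1)/2$ $\infty$-cusps are rational, the $0$-cusps form a single orbit), this yields $J_1(p)(\Q)_{\tors}\subseteq\cuspidal(p)^{\Gal_{\Q}}$, while Manin--Drinfeld \eqref{eq:cusp-jac_inclusion} supplies the reverse inclusion; a final independent computation confirms $\cuspidal(p)^{\Q}=\cuspidal(p)^{\Gal_{\Q}}$. Without this detour through $\cuspidal(p)$ and its explicit Galois structure, the two ends of your chain provably do not meet, so the proposal as written has a genuine gap.
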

\noindent which, by~\eqref{eq:cusp-jac_inclusion}, confirms both Conjecture~\ref{conj:ces} and the equality in~\eqref{equation:diff_cusps-rat_cusps_inclusion} for those cases.  

In Section~\ref{sec:strategy} we present the overall strategy and the proof of Theorem~\ref{theorem:CESholds}, which relies on the use of certain linear operators, derived from the Hecke operators in the theory of modular forms, that annihilate the rational torsion subgroup of the modular Jacobian. At the end of the section, we also provide tables summarizing the computational results (see Table~\ref{subsec:tableproof}).

Furthermore, by applying the same methods to explicitly determine the structure of \(J_1(p)(\Q)_{\tors}\), we also give another proof of both Conjectures~\ref{conj:ces} and the equality in~\eqref{equation:diff_cusps-rat_cusps_inclusion} for the primes previously addressed in~\cite{CES} up to \(113\). The main contribution in this case is the precise description of the rational torsion subgroups (see Table~\ref{subsec:tablelist1}).  

For composite level \(N\), the problem of determining whether the rational torsion subgroup of $X_1(N)$ is equal to its rational cuspidal subgroup has been studied, and a generalization of the conjecture of Conrad, Edixhoven and Stein has been proposed (see~\cite{DerEtal}*{Conj.~4.15}, and the discussion in Section~4.6 of the same paper for further details).

\subsection{Comments on the code}
This paper includes a significant computational component, utilizing the computer algebra system Magma~\cite{magma} to carry out the necessary calculations. The computations were performed using Magma V2.28-3 on a Linux machine equipped with 16 GB of RAM (btm2x4) at the University of Bayreuth, as well as on the first author's personal laptop. The code supporting our claims is available at~\cite{code}. 

\section*{Acknowledgements}
This work is based on the master's thesis of the first author~\cite{DeLeo}, partially conducted during an Erasmus exchange at the University of Bayreuth in Germany under the supervision of the second author. The first author would like to express sincere gratitude to the University of Bayreuth for its warm welcome and hospitality, and especially to the computational team for providing access to the computational resources that were essential for carrying out the calculations presented in this article. Special thanks also go to Laura Paladino for her early review of both the thesis and this manuscript, as well as for her valuable suggestions. The authors further thank Maarten Derickx for his helpful comments and for patiently clarifying certain issues arising from the misformulation of a conjecture in an earlier version of this draft.

%%%%%%%%%%%%%%%%%%%%%%%%%%%%%%%%%%%%%%%%%%%%%%%%%%%%%%%%%%%%%%%%%%%%%%%%%%%%%%%%
\section{Preliminaries} \label{sec:preliminaries}
%%%%%%%%%%%%%%%%%%%%%%%%%%%%%%%%%%%%%%%%%%%%%%%%%%%%%%%%%%%%%%%%%%%%%%%%%%%%%%%%

\subsection{Review of modular curves}
In the present section, we give a brief overview on the modular curves $X(\Gamma)$
associated to some subgroups $\Gamma$ of finite index of $\SL_2(\Z)$. We refer the reader to~\cite{shimura} and~\cite{diamond-im} for further details.

Let \(\HH =\{z\in\C : \Im(z)>0\}\) denote the complex open upper half plane. The group $\SL_2(\Z)$ acts both on $\HH$ and on the extended upper half plane $\HH^{*}=\HH\cup\PP^{1}(\Q)$ via M\"obius transformations
\[\gamma z=\frac{az+b}{cz+d},\qquad
\text{where }z\in\HH^* \text{ and }\gamma = \mtwo{a}{b}{c}{d}\in\SL_2(\Z).\] 

\noindent We fix a positive integer $N\geq 1$, and denote by $\Gamma(N)$ the kernel of the reduction modulo $N$ map, that is the set \(\{\gamma \in \SL_2(\Z): \gamma \equiv 1 \mod{N} \}\). Recall that $\Gamma\subseteq\SL_2(\Z)$ is said to be a \emph{congruence subgroup} of $\SL_2(\Z)$ if and only if $\Gamma(N)\subseteq\Gamma$ for some $N$, and the minimum of such $N$'s is called the \emph{level} of the subgroup. We observe that every congruence subgroup has finite index in $\SL_2(\Z)$~\cite{shimura}*{Section~1.6}, but there are subgroups of finite index which are not congruence subgroups~\cite{birch}.  

Given a congruence subgroup $\Gamma$, the action of $\SL_2(\Z)$ induces an action of $\Gamma$ on $\HH$ and on $\HH^{*}$. The quotient space $Y(\Gamma)(\C) = \Gamma\bs\HH$ is a noncompact open Riemann surface. Since $\Gamma$ has finite index in $\SL_2(\Z)$, the set $S_{\Gamma}=\Gamma\bs\PP^{1}(\Q)$ contains only finitely many points, called \emph{cusps}. Adding these points to $Y(\Gamma)(\C)$, we obtain the compact Riemann surface $X(\Gamma)(\C)=\Gamma\bs\HH^{*}$. The spaces $Y(\Gamma)$ and $X(\Gamma)$ are also called \emph{modular curves}. 

\noindent We restrict our attention to the following congruence subgroups of $\SL_2(\Z)$:
\begin{align*}
    \Gamma_0(N) =& \left\{\mtwo{a}{b}{c}{d} \in \SL_2(\Z) : N \mid c \right\}, \\
    \Gamma_1(N) =& \left\{\mtwo{a}{b}{c}{d} \in \SL_2(\Z) : (c,d) \equiv (0,1) \bmod{N} \right\} 
\end{align*}
and we write 
\begin{align*}
    Y_0(N)(\C) =& Y(\Gamma_{0}(N))(\C), & X_0(N)(\C) =& X(\Gamma_{0}(N))(\C) \\
    Y_1(N)(\C) =& Y(\Gamma_{1}(N))(\C), & X_1(N)(\C) =& X(\Gamma_{1}(N))(\C)
\end{align*}
for the respective modular curves and the corresponding compactifications.

\smallskip

\noindent \textbf{Moduli Space Setting.} 
Modular curves parametrize elliptic curves equipped with extra torsion data. More precisely, 
\begin{itemize}
\item the complex points of $Y_0(N)$ are in natural bijection with the isomorphism classes of pairs $(E,C)$, where $E$ is an elliptic curve over $\C$ and $C$ is a cyclic subgroup of $E(\C)$ of order~$N$.
\item the complex points of $Y_1(N)$ are in natural bijection with the isomorphism classes of pairs $(E,P)$, where $E$ is an elliptic curve over $\C$ and $P$ is a point on $E(\C)$ of exact order~$N$. 
\end{itemize}
The modular curve \(X_0(N)(\C)\)  has a canonical model over \(\Q\). We denote it by \(X_0(N)_{\Q}\) (or simply by \(X_0(N)\)). About the modular curve \(X_1(N)(\C)\), we have seen that its noncuspidal points parametrize pairs \((E,P)\), where \(E\) is a complex elliptic curve and \(P\in E(\C)\) is a point of exact order \(N\). There is another (more natural) moduli interpretation for the points of \(Y_1(N)(\Q)\) as pairs \((E,\phi)\), where \(E\) is an elliptic curve over \(\Q\) and \(\phi\colon \mu_N\to E\) is an embedding of group schemes of the group \(\mu_N\) of the \(N\)th roots of unity into \(E\). 

These two moduli interpretations are different but yield $\Q$‑models for the compactified modular curve $X_1(N)$, which are isomorphic over~$\Q$ as follows: consider a point $(E,\phi)$ and a choice of primitive root $\zeta\in\mu_N$. If $e_N$ denotes the Weil pairing, then the set of $Q\in E[N]$ such that $e_N(Q,\phi(\zeta))=\zeta$ is a coset $C$ of the subgroup $\im(\phi)\subset E[N]$.  We then set $E' = E/\im(\phi)$ and $P = C \bmod\im(\phi)$. We will consider this latter model, which we denote by \(X_1(N)_{\Q}\) (or simply by \(X_1(N)\)). We remark that in our choice, the cusp at infinity is $\Q$-rational. For more information about models of modular curves see~\cite{shimura}*{Chapters 1,6} and~\cite{diamond-im}*{II.~8}.

There is a natural map \(X_{1}(N)\rightarrow X_{0}(N)\) (induced over \(\C\) by the identity on \(\HH^{*}\)).~This makes \(X_{1}(N)\) into a (possibly ramified) Galois covering of \(X_{0}(N)\), whose Galois group consists of the diamond operators \(\langle d \rangle\ \text{for}\ d\in(\Z/N\Z)^{\times}, \text{ where } \langle-1\rangle\) is the identity, so the Galois group is naturally isomorphic to \((\Z/N\Z)^{\times}/\{\pm 1\}\). In terms of the interpretation of points on \(Y_{1}(N)\) as pairs \((E,\phi\colon\mu_{N}\rightarrow E)\), the action of \(\langle d \rangle\) corresponds to pre-composing \(\phi\) with the \(d\)th power map. If \(H\subseteq (\Z/N\Z)^{\times}/\{\pm 1\}\) is a subgroup, one can also consider the intermediate curve \(X_{H}(N)\) between \(X_{1}(N) \text{ and } X_{0}(N)\) defined as the quotient space \(H\bs X_{1}(N)\) where \((\Z/N\Z)^{\times}/\{\pm1\}\) acts as the diamond operators. Indeed, taking $H=1$ gives \(X_1(N)\) and taking \(H=(\Z/N\Z)^{\times}/\{\pm 1\}\) gives \(X_0(N)\).

To ease notation, we denote the Jacobians of the modular curves \(X_0(N)\text{ and }X_1(N)\) by \(J_0(N)\text{ and }J_1(N)\), respectively. These Jacobians are all defined over $\Q$.

\smallskip

\noindent\textbf{Rationality of Cusps.} We fix the level~\(N=p \ge 5\) prime. Let $\Gamma$ be a congruence subgroup containing $\Gamma_1(p)$ and consider the modular curve $X(\Gamma)$. Its cusps are defined over $\Q(\mu_p)$~\cite{shimura}*{Section~6.2} and can be identified with equivalence classes
\[\{(x,y)\in\Z^2: \gcd(x,y,p)=1\}\big/ \!\sim\] 
where $(x,y)\sim(x',y')$ if and only if there exists $\gamma=\smat{a}{b}{c}{d}\in\Gamma$ such that 
\begin{equation}    \label{eq:cusps}
(x',y')=\gamma(x,y)=(ax+by,cx+dy).
\end{equation}
We denote the class of $(x,y)$ by $[x:y]$. Let $\zeta_p\in\Gal(\Q(\mu_p)/\Q)=(\Z/p\Z)^{\times}$ be a generator of the Galois group. For each $d\in(\Z/p\Z)^{\times}$, let $\sigma_d$ represent the automorphism $\zeta_p\mapsto \zeta_p^d$. Then it acts on cusps 
\[\sigma_d([x:y]):=[x:dy]=\smat{1}{0}{0}{d}[x:y]=\diamondop{d}[x:y]\] 
as the usual diamond operators.~A similar statement applies to the cusps of~\(X_H(p)\) (cf.\ Theorem~1.3.1 in~\cite{stevens}).

The curve \(X_0(p)\) has two cusps $[1:0]$ and $[0:1]$, usually called \(\infty\) and \(0\), both of which are rational over \(\Q\). 

When we specialize to $\Gamma=\Gamma_1(p)$ in \eqref{eq:cusps}, we see that the cusps of $X_1(p)$ form orbit classes $\{\pm(x+by:y):b\in\Z\}$. One convenient choice of representatives is
\[
[x:p],\quad 1\le x\le\frac{p-1}{2},
\qquad
[1:y],\quad 1\le y\le\frac{p-1}{2}.
\]

Hence \(X_1(p)\) has \(p-1\) cusps, which split under the diamond operators into two orbits of size \((p-1)/2\).  The orbit containing \(\infty\) (the “\(\infty\)‑cusps”) consists of rational cusps; the other orbit (the “\(0\)‑cusps”) consists of cusps defined over \(\Q(\mu_{p})^+\), the maximal totally real subfield of the cyclotomic field. 

%%%%%%%%%%%%%%%%%%%%%%%%%%%%%%%%%%%%%%%%%%%%%%%%%%%%%%%%%%%%%%%%%%%%%%%%%%%%%%%%
\section{Computing rational torsion on modular Jacobians}   \label{sec:strategy} 
%%%%%%%%%%%%%%%%%%%%%%%%%%%%%%%%%%%%%%%%%%%%%%%%%%%%%%%%%%%%%%%%%%%%%%%%%%%%%%%%
In this section, we prove Theorem~\ref{theorem:CESholds}. As observed in~\eqref{eq:cusp-jac_inclusion} with \(\Gamma=\Gamma_1(p)\), the inclusion ``$\subseteq$'' follows from the theorems of Manin and Drinfeld. For the converse inclusion, our idea is to construct an upper bound for the rational torsion subgroup. To do this, we first recall some facts about the Hecke algebra.
\begin{subsection}{Hecke Operators on Modular Curves} \label{subsec:hecke_algebra}
Consider the modular curve $X_1(N)$ of level $N$. For each prime~$q$ not dividing~$N$, define the auxiliary modular curve
\[
  X_{1,0}(N,q):=\bigl(\Gamma_1(N)\cap\Gamma_0(q)\bigr)\bs\HH^*
\]
whose noncuspidal points over $\C$ classify triples $(E,P,C)$, where $E$ is a complex elliptic curve, $P$ is a point of order~$N$ and $C\subset E$ is a cyclic subgroup of order~$q$. There exist\footnote{Actually, degeneracy maps can be defined for a general modular curve of level $N$, but since we are interested in $J_1(N)$ we will restrict our attention to $X_1(N)$.} \emph{degeneracy maps}~\cite{diamond-im}*{II.7.3.}:
\begin{equation*}   
X_1(N)\xleftarrow{\alpha_q(N)} X_{1,0}(N,q) \xrightarrow{\beta_q(N)} X_1(N).
\end{equation*}
which are defined over $\Q$ and can be used to define correspondences on the modular curve $X_1(N)$, as described in~\cite{diamond-im}*{I.3.2.}. The term ``degeneracy'' comes from the fact that these maps, in a sense, ``forget'' part of the level structure. Indeed, note that their moduli-theoretic interpretation over the complex numbers is given by
\begin{equation*}    \label{eq:degmaps}
  \alpha_q(N)(E,P,C) = (E,P)
  \quad\text{and}\quad
  \beta_q(N)(E,P,C) = (E/C,P\bmod C).
\end{equation*}
Let $J_{1,0}(N,q):=J\bigl(X_{1,0}(N,q)\bigr)$. Using the Albanese and Picard functoriality, the degeneracy maps between modular curves induce maps
\begin{equation*}    \label{eq:degmaps_jac}
    \alpha_{q}(N)_*,\beta_q(N)_*\colon J_{1,0}(N,q)\rightarrow J_1(N),\quad \alpha_{q}(N)^*,\beta_q(N)^*\colon J_1(N)\rightarrow J_{1,0}(N,q).
\end{equation*}
on the corresponding Jacobians. For every prime $q$ coprime to $N$, we define the $q$-th \emph{Hecke operator} as
\begin{equation}    \label{eq:heckeop}
T_q=T_q(N):= \beta_q(N)_* \circ\alpha_q(N)^*\colon J_1(N)\to J_1(N).
\end{equation}
which is defined over $\Q$ (since $\alpha_q(N)$ and $\beta_q(N)$ are). The operators $T_q$ for a general prime $q$ are constructed in a similar way.

For each $d\in(\Z/N\Z)^{\times}$, the diamond operators $\diamondop{d}$ discussed before also naturally extend to endomorphisms of $J_1(N)$ which commute with the Hecke operator $T_q$. For simplicity, we denote them by the same symbol $\diamondop{d}$. 

We set $T_1:=1$, and having defined $T_q$ for $q$ prime, the Hecke operator $T_n$ for general $n$ can be characterized inductively by the relations:
\begin{align}   \label{align:hecke_rels}
T_{q^r} & = T_{q^{r-1}}T_q-\diamondop{q}qT_{q^{r-2}} & &\text{ if $r\geq 2$ and $q$ is coprime to $N$,}\\
T_{q^r} & = T_q^r & &\text{ if $q$ divides $N$,}\\
T_{mn} & = T_{m}T_{n} & &\text{ if $\gcd(m,n)=1$.}
\end{align}

\begin{remark} \label{remark:hecke_adjoint}
Some authors define $T_q$ as $\alpha_{*}\circ\beta^{*}$ instead of $\beta_{*}\circ\alpha^{*}$, Our choice was purely arbitrary, as there is a canonical equivalence and one can switch between the two definitions using the Atkin-Lehner involution~\cite{diamond-im}*{III. Remark~10.2.2}. 
\end{remark}

We write $\T(N)$ for the ring 
\[\Z[T_n:\forall n\geq1]=\Z[T_q,\diamondop{d}:q\text{ prime},d\in(\Z/N\Z)^{\times}].\]
This forms a $\Z$-algebra of endomorphisms of $J_1(N)$ defined over $\Q$, which is called \emph{Hecke algebra}.
\end{subsection}

The following two theorems are the main tools for establishing an upper bound for the rational torsion subgroup. The first theorem shows that we can construct this space using the annihilators of operators derived from the Hecke operators discussed earlier. The second theorem states that these annihilators essentially ``live'' in the cuspidal group, thus providing a link between the rational torsion subgroup and the rational cuspidal subgroup in the proof of the inclusion ``\(\supseteq\)'' in Theorem~\ref{theorem:CESholds}.

\begin{theorem} \label{Tq_ker}
    Let \(q\nmid p\) be a prime and \(P\in J_{1}(p)(\Q)_{\tors}\) such that \(q\) is odd or \(P\) has odd order. Then \((T_q-\diamondop{q}-q)(P)=0\).
\end{theorem}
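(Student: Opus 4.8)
The plan is to reduce the identity $(T_q - \diamondop{q} - q)(P) = 0$ to a statement about the Eichler--Shimura congruence relation in characteristic $q$, following the classical ``reduction mod $q$'' argument that underlies all results of this type (as in Mazur's work and its descendants). First I would use the fact that $J_1(p)$ has good reduction at every prime $q \nmid p$, since $X_1(p)$ has good reduction there. Let $\Ftilde_q$ denote the $q$-power Frobenius endomorphism of the special fiber $J_1(p)_{\F_q}$. The Eichler--Shimura relation states that, as endomorphisms of $J_1(p)_{\F_q}$, one has $T_q = \Ftilde_q + \diamondop{q} q \Ftilde_q^{-1}$, equivalently
\begin{equation} \label{eq:eichler-shimura-plan}
T_q \equiv \Ftilde_q + \diamondop{q} \, \Ver_q \pmod{q},
\end{equation}
where $\Ver_q$ is the Verschiebung (the dual of Frobenius), because $\Ftilde_q \circ \Ver_q = q$. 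The key point is that on the $\F_q$-rational points, and more generally on torsion points of order prime to $q$ reduced mod $q$, the Frobenius $\Ftilde_q$ acts trivially on $\Q$-rational points and the Verschiebung contributes a factor of $q$.

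The main steps, in order, are as follows. First I would observe that a torsion point $P \in J_1(p)(\Q)_{\tors}$ reduces injectively into $J_1(p)(\Fbar_q)$ on its prime-to-$q$ part, by the standard fact that reduction is injective on prime-to-$q$ torsion for abelian varieties with good reduction (this is why the hypothesis ``$q$ is odd or $P$ has odd order'' enters: it ensures we can work away from the characteristic, or else handle the $2$-part separately). Second, since $P$ is $\Q$-rational, its reduction $\tilde P$ is $\F_q$-rational, hence fixed by $\Ftilde_q$, so $\Ftilde_q(\tilde P) = \tilde P$. Third, applying $\Ftilde_q^{-1}$ to the relation $\Ftilde_q(\tilde P) = \tilde P$ gives $\Ver_q(\tilde P) = \diamondop{q}^{-1} \cdot(\text{something})$; more cleanly, I would apply the Eichler--Shimura relation directly: $T_q(\tilde P) = \Ftilde_q(\tilde P) + \diamondop{q}\Ver_q(\tilde P)$. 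Since $\Ftilde_q(\tilde P) = \tilde P$ and $\Ver_q = q \Ftilde_q^{-1}$, we get $\Ver_q(\tilde P) = q\,\Ftilde_q^{-1}(\tilde P) = q \tilde P$ (as $\Ftilde_q$ fixes $\tilde P$, so does $\Ftilde_q^{-1}$). Substituting yields $T_q(\tilde P) = \tilde P + \diamondop{q}\, q\, \tilde P = (\diamondop{q} q + 1)\tilde P$. This is not yet the claimed relation, so I would need to be careful: the correct Eichler--Shimura form giving the Eisenstein-type relation $T_q = \diamondop{q} + q$ on rational torsion comes from combining $\Ftilde_q = 1$ with the dual relation. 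The cleaner route is to use that both $\Ftilde_q$ and its dual satisfy the characteristic-polynomial relation $\Ftilde_q^2 - T_q \Ftilde_q + \diamondop{q} q = 0$; setting $\Ftilde_q = 1$ on $\tilde P$ gives $1 - T_q + \diamondop{q} q = 0$ on $\tilde P$, i.e. $T_q(\tilde P) = (1 + \diamondop{q} q)\tilde P$, and one recovers $(T_q - \diamondop{q} - q)(\tilde P) = 0$ only after correctly accounting for how Frobenius and Verschiebung act, which I expect requires the precise normalization in which $\diamondop{q}$ acts on $\tilde P$ as a $\Q$-rational (hence Galois-fixed) point.

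The final step is to transfer the conclusion back from the special fiber to $J_1(p)(\Q)$: since $(T_q - \diamondop{q} - q)(P)$ is a $\Q$-rational torsion point whose reduction mod $q$ vanishes, and reduction is injective on prime-to-$q$ torsion, the point itself must vanish, giving $(T_q - \diamondop{q} - q)(P) = 0$. \textbf{The hard part} will be pinning down the exact form of the Eichler--Shimura relation in the normalization used in this paper (where $T_q = \beta_{q*}\circ\alpha_q^*$, as opposed to the adjoint convention noted in Remark~\ref{remark:hecke_adjoint}) and verifying that the action of the Galois group (equivalently, that $\Ftilde_q$ fixes the $\Q$-rational reduction) interacts correctly with the diamond operator $\diamondop{q}$ so that one obtains exactly $T_q \equiv \diamondop{q} + q$ rather than a twisted variant. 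I would also need to treat the parity hypothesis honestly: for even $q$ the relation involves $q = 2$ and the $2$-torsion must be excluded unless $P$ has odd order, so I would isolate the $2$-primary part and argue that the hypothesis ``$q$ odd or $P$ of odd order'' is precisely what guarantees the injectivity of reduction needed for the transfer step.
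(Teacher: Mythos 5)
Your overall strategy is the same as the paper's: reduce modulo $q$, apply the Eichler--Shimura relation, use that Frobenius fixes the reduction of a $\Q$-rational point together with $\Ver_q\circ\Frob_q=q$, and transfer back via injectivity of reduction. But there is a genuine gap at exactly the point you flag as ``the hard part,'' and the resolution you sketch does not work. You take the Eichler--Shimura relation in the form $T_q \equiv \Frob_q + \diamondop{q}\Ver_q$, whereas under the paper's conventions (with $T_q=\beta_{q*}\circ\alpha_q^*$ on the $\mu_p$-model, citing Diamond--Im, p.~87) the relation reads
\[
T_{q,\F_q} \;=\; \diamondop{q}\Frob_q + \Ver_q,
\qquad
\Ver_q\circ\Frob_q \;=\; q,
\]
with the diamond operator on the \emph{Frobenius} term. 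Both of your computations --- the direct substitution and the characteristic polynomial $\Frob_q^2 - T_q\Frob_q + \diamondop{q}q = 0$, which is just a restatement of your form of the relation --- therefore yield $(T_q - 1 - \diamondop{q}q)(\bar{P})=0$, and this differs from the desired conclusion by $(1-q)\bigl(\diamondop{q}-1\bigr)(\bar{P})$, which has no reason to vanish. Your proposed escape --- that the action of $\diamondop{q}$ on $\bar{P}$ should be constrained because $P$ is $\Q$-rational --- is incorrect: $\diamondop{q}$ is an endomorphism of $J_1(p)$ defined over $\Q$, so it maps rational points to rational points but Galois-invariance of $P$ says nothing about $\diamondop{q}P$; if $\diamondop{q}$ acted trivially on rational torsion, the theorem would not need the diamond term at all. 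With the correct normalization the computation closes immediately, as in the paper: $T_q(\bar{P}) = \diamondop{q}\Frob_q(\bar{P}) + \Ver_q(\Frob_q(\bar{P})) = \diamondop{q}\bar{P} + q\bar{P}$, giving $(T_{q,\F_q}-\diamondop{q}-q)(\bar{P})=0$.

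There is also a secondary inaccuracy in your transfer step. You invoke injectivity of reduction on the \emph{prime-to-$q$} torsion, but the point $(T_q-\diamondop{q}-q)(P)$ may well have order divisible by $q$ when $q$ is odd, so prime-to-$q$ injectivity does not suffice there. What is actually used (and what the paper states) is that for odd $q$ the reduction map is injective on all of $J_1(p)(\Q)_{\tors}$ --- the kernel of reduction is a formal group over $\Z_q$, which is torsion-free for $q>2$ --- while for $q=2$ it is injective only on odd-order torsion; this dichotomy, not a prime-to-$q$ restriction, is precisely where the hypothesis ``$q$ odd or $P$ of odd order'' enters.
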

\begin{proof}
    Let \(n\) be the order of \(P\). Then \((T_q-\diamondop{q}-q)(P)\in J_{1}(p)(\Q)\) is a point of order dividing \(n\). Recall that in \(\End(J_1(p)_{\F_q})\), we have the following Eichler-Shimura relations (see~\cite{diamond-im}*{p. 87})
    \begin{align} 
        T_{q,\F_q} &= \diamondop{q}\Frob_q+\Ver_q,   \label{Eichler-Shimura:1} \\    
         q         &= \Ver_q\circ \Frob_q,      \label{Eichler-Shimura:2}
    \end{align}
    where \(\Frob_q\) is the Frobenius on \(J_1(p)_{\F_q}\) and \(\Ver_q\) is its dual (Verschiebung). 
    If \(\Bar{P}\) is the reduction \( \bmod\, q \) of \(P\), then we have \(\Frob_q(\Bar{P})=\Bar{P}\). So, using relations \eqref{Eichler-Shimura:1} and \eqref{Eichler-Shimura:2}, we obtain
    \[
        T_{q,\F_q}(\Bar{P})=\diamondop{q}\Frob_q(\Bar{P})+\Ver(\Bar{P})=\diamondop{q}\Bar{P}+\Ver(\Frob_q(\Bar{P}))=\diamondop{q}\Bar{P}+q\Bar{P},
    \]
   giving \((T_{q,\F_q}-\diamondop{q}-q)(\bar{P})=0\). Since the reduction map is injective on \(J_{1}(p)(\Q)\) when \(q\) is odd, and it is injective on odd order torsion when \(q=2\), the claim follows.
\end{proof}

\begin{theorem}[Derickx]
    Let \(q\nmid N\) be a prime. We consider \(\eta_q:=T_q-\diamondop{q}-q\in\T\) as a correspondence on \(X_1(N)\), inducing an endomorphism of the divisor group of \(X_1(N)\) over \(\C\). Then the kernel of \(\eta_q\) is contained in the divisors supported in cusps.
\end{theorem}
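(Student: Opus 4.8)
The plan is to prove the equivalent statement that $\eta_q$ is \emph{injective} on the subgroup of divisors supported away from the cusps. The theorem then follows formally: the Hecke correspondence $T_q$ and the operator $\diamondop{q}$ both send cusps to cusps and non-cusps to non-cusps (the degeneracy maps carry an elliptic curve with its level structure again to elliptic curves with level structure), so $\eta_q$ respects the decomposition $\Div(X_1(N)) = \Div_{\mathrm{cusp}} \oplus \Div_{\mathrm{nc}}$; a divisor $D$ in $\ker\eta_q$ splits as $D = D_{\mathrm{cusp}} + D_{\mathrm{nc}}$ with $\eta_q(D_{\mathrm{nc}}) = 0$, and injectivity on $\Div_{\mathrm{nc}}$ gives $D_{\mathrm{nc}} = 0$. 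I would work complex-analytically, representing a non-cuspidal point of $X_1(N)$ by a point $\tau \in \HH$ and $T_q$ by the usual degree-$q$ integral matrices, with coset representatives $\smat{q}{0}{0}{1}$ and $\smat{1}{i}{0}{q}$ for $0 \le i \le q-1$, so that $T_q([\tau]) = [q\tau] + \sum_{i}[(\tau+i)/q]$.

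The key device is an $\SL_2(\Z)$-invariant height that detects the cusps. I set
\[
\Phi(\tau) := \sup_{\gamma \in \SL_2(\Z)} \Im(\gamma\tau),
\]
which is finite, $\SL_2(\Z)$-invariant, hence descends to a function on non-cuspidal points of $X_1(N)$; since it factors through $j$, it is automatically invariant under the diamond operators. One has $\Phi(\tau) \ge \sqrt{3}/2$, with $\Phi \to \infty$ exactly as $\tau$ approaches a cusp. The crucial interaction with $T_q$ is a strict expansion property: normalizing $\tau$ so that $\Im\tau = \Phi(\tau)$, every point of $T_q([\tau])$ satisfies $\Phi(\cdot) \le q\,\Phi(\tau)$ (because $\Im(\gamma M\tau) = q\,\Im\tau/|c\tau+d|^2$ for a det-$q$ matrix $M$ and $|c\tau+d| \ge 1$ in the fundamental domain), and the bound $q\,\Phi(\tau)$ is attained by the single ``most degenerate'' neighbor $[q\tau]$ and by no other, since $|c\tau+d| = 1$ forces the bottom row $(0,\pm1)$, which is incompatible with the representatives $\smat{1}{i}{0}{q}$. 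Conceptually this is the archimedean shadow of the Kronecker congruence $\Phi_q(X,Y) \equiv (X - Y^q)(X^q - Y) \bmod q$, i.e.\ of the Eichler--Shimura relation used in Theorem~\ref{Tq_ker}: the Frobenius direction is the expanding one.

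With this in hand I would argue by extremality. Let $D = \sum_k n_k x_k \ne 0$ be supported on non-cuspidal points with $\eta_q(D) = 0$, and put $m = \max_k \Phi(x_k)$. Rewriting the relation as $T_q(D) = \diamondop{q}D + qD$, the right-hand side is supported on points of height $\le m$ (diamonds preserve $\Phi$), whereas the left-hand side reaches height $qm > m$; hence the part of $T_q(D)$ above height $m$ must vanish internally. At the very top height $qm$, by the uniqueness in the expansion property, only the most-degenerate neighbors $[q\tau_k]$ of the maximal points ($\Phi(x_k) = m$) occur, each with multiplicity one. Thus for every point $y$ with $\Phi(y) = qm$ one obtains the vanishing
\[
\sum_{k:\ \Phi(x_k)=m,\ [q\tau_k]=y} n_k = 0 .
\]

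The main obstacle is to upgrade this top-level vanishing into $n_k = 0$ for the maximal $x_k$, which would contradict $D \ne 0$ by descending induction on $m$. This is precisely the question of whether the ``most-degenerate-neighbor'' map $x_k \mapsto [q\tau_k]$ is injective on points of a fixed height: a short computation shows that $[q\tau_k] = [q\tau_{k'}]$ forces $\tau_{k'} = \smat{a}{b/q}{cq}{d}\tau_k$ with $\smat{a}{b}{c}{d} \in \Gamma_1(N)$, so the two points genuinely coincide when $q \mid b$ but may remain distinct otherwise. Ruling out such accidental mergers, and thereby decoupling the relations above point by point, is the crux. I expect to handle it either by analyzing the local structure of the $q$-isogeny graph near the cusps -- the canonical-subgroup/Frobenius direction, together with its target, should pin down its source up to finitely many level-$N$ choices that the Galois/diamond bookkeeping separates -- or, if a clean injectivity statement is unavailable, by a secondary extremal analysis sweeping down through the intermediate heights in $(m, qm]$. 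Everything else (splitting off the cusps, and the invariance and expansion of $\Phi$) is routine; it is this non-injectivity that is the heart of the matter.
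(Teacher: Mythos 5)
Your proposal is not a proof, and you say so yourself: the step you label ``the crux'' --- ruling out cancellation among distinct height-maximal points sharing the same expanded neighbour --- is exactly where the content of the theorem lies, and neither of your suggested repairs works as indicated. The mergers are real and plentiful: whenever $\Im\tau>1$, the points $x=[\tau]$ and $x'=[\tau+b/q]$ with $q\nmid b$ are genuinely distinct on $X_1(N)$ (equality would force a $\gamma\in\Gamma_1(N)$ with $|c\tau+d|=1$, hence $c=0$, hence an integral translation), they satisfy $\Phi(x)=\Phi(x')=\Im\tau$, and they have the same top neighbour, since $[q\tau+b]=[q\tau]$ because integer translations lie in $\Gamma_1(N)$. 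So your top-height relation only ever yields $\sum_{b\bmod q} n_{[\tau+b/q]}=0$ on each merger class, and no injectivity statement is available --- the diamond/Galois bookkeeping does not separate $[\tau]$ from $[\tau+b/q]$, and the merging points sit at the height maximum of the support, nowhere near the cusps, so an analysis of the isogeny graph ``near the cusps'' is aimed at the wrong locus. Nor can the extremal method be closed at the top by iteration: replacing $T_q$ by $T_{q^r}$ replaces the fibre by $\{[\tau+b/q^r]:b\bmod q^r\}$, whose intersection with the finite support of $D$ stabilizes for large $r$, so the higher top-level relations merely repeat the first. The toy divisor $D=[\tau]-[\tau+b/q]$ is instructive: $\eta_q D\neq 0$, but the nonvanishing is visible only among the down-neighbours $[(\tau+i)/q]$ versus $[\tau/q+(b+qi)/q^2]$, i.e.\ at heights $\leq\Phi(x)$ --- precisely the intermediate range that your deferred ``secondary extremal analysis'' would have to control and for which you give no argument.

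There is also an error in the part you call routine. The uniqueness claim --- that $q\,\Phi(\tau)$ is attained only by $[q\tau]$ because $|c\tau+d|=1$ forces bottom row $(0,\pm1)$ --- fails when $\Phi(\tau)\leq 1$: on the boundary arc $|\tau|=1$ of the fundamental domain one also has $|c\tau+d|=1$ for $(c,d)=(\pm1,0)$, and then the down-neighbour $[\tau/q]$ attains height $q\,\Phi(\tau)$ as well, since $\smat{0}{-1}{1}{0}$ sends $\tau/q$ to $-q/\tau$ with $\Im(-q/\tau)=q\,\Im\tau/|\tau|^2=q\,\Im\tau$. A putative kernel divisor could a priori be supported entirely at such low heights, so this case cannot be dismissed. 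Finally, note that the paper offers no argument of its own to compare against: its proof of this theorem is the citation \cite{DKKS}*{Prop.~2.4}, so that cited proof is the standard of comparison. Your formal reduction to injectivity on non-cuspidal divisors and the expansion property of $\Phi$ (valid for $\Phi>1$) are fine as far as they go, but the heart of the matter is missing, and the statement remains unproved by your proposal.
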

\begin{proof}
    See~\cite{DKKS}*{Prop.~2.4}.
\end{proof}
When \(N = p\) is prime, we can describe the kernel of the operator \(\eta_q\) explicitly. On the one hand, by the above Theorem~\ref{Tq_ker}, \(\eta_q\) annihilates all divisors supported on rational cusps. On the other hand, using~\cite{parent00}*{Section 2.4} and the remark after Prop.~2.4 of \cite{DKKS}, one can see that \(\eta_q\) also annihilates divisors supported on irrational cusps if and only if they are invariant under the action of the diamond operator \(\diamondop{q}\).

\subsection{Computing with modular symbols} \label{subsection:modsym}
In order to make the theory discussed above practical for direct computations, we introduce modular symbols. They provide a finite presentation of the homology groups associated with modular curves, making them a powerful tool in arithmetic geometry. For more about modular symbols, see~\cite{stein}*{Ch.~8}.

For any distinct pair of cusps $\alpha,\beta\in \PP^1(\Q)$, we use $\{\alpha,\beta\}$ to denote an oriented path with endpoints $\alpha$ and $\beta$  that starts at $\alpha$ and ends at $\beta$; it corresponds to a directed loop on the Riemann surface $X(1)=\HH^*/\Gamma(1)$.

Let $\M_2$ be the free abelian group generated by the formal symbols $\{\alpha,\beta\}$, with $\alpha,\beta\in \PP^1(\Q)\}$, modulo the relations\footnote{The notation $\{\alpha,\beta\}$ is standard in the literature, but it can be confusing as suggesting that the order of $\alpha$ and $\beta$ is not important, whereas in fact it does matter. Indeed, note that $\{\alpha,\beta\} = -\{\beta,\alpha\}$ according to the relations.}
\[
\{ \alpha,\beta\} + \{\beta,\gamma\} + \{\gamma,\alpha\} = 0,
\]
and modulo any torsion. We equip $\M_2$ with a left $\SL_2(\Z)$-action by defining 
\[ g \{\alpha,\beta\} :=  \{g\alpha,g\beta\}.\]
We also define $\B_2$ as the free abelian group with basis $\PP^1(\Q)$, on which $\SL_2(\Z)$ acts on the left via linear fractional transformations.

For any congruence subgroup $\Gamma\subseteq \SL_2(\Z)$, we can view $\M_2$ and $\B_2$ as $\Z[\Gamma$]-modules. Let $I_\Gamma:=\langle g-1:g\in \Gamma\rangle\subseteq \Z[\Gamma]$ and consider the $\Gamma$-invariant quotients:
\begin{equation*}
(\M_2)^\Gamma:=\M_2/I_\Gamma \M_2,\qquad (\B_2)^\Gamma:=\B_2/I_\Gamma \B_2,
\end{equation*}

We define $\M_2(\Gamma)$ as the torsion-free quotient of $(\M_2)^\Gamma$ and we call it the space of (weight 2) \emph{modular symbols for $\Gamma$}. Similarly, let $\B_2(\Gamma)$ be the torsion-free quotient of $(\B_2)^\Gamma$; this is the space of (weight 2)~\emph{boundary symbols} for $\Gamma$. 

\begin{remark}
The space $\M_2(\Gamma)$ is finitely generated~\cite{stein}*{\textsection8, Prop.~1.24}.
\end{remark}

Let $X(\Gamma)$ be a modular curve and consider the homology $H_1(X(\Gamma),\{\text{cusps}\},\Z)$ \emph{relative to the cusps}; this means that in addition to loops, we include homology classes of paths that start and end at cusps. There is a natural homomorphism
\[
\varphi\colon \M_2(\Gamma)\to H_1(X(\Gamma),\{\text{cusps}\}, \Z)
\]
that sends $\Z$-linear combinations of geodesic paths in $\HH^*$ to their images in $X(\Gamma)=\Gamma\backslash \HH^*$.

We now define the \emph{boundary map}
\begin{align*}
\delta\colon \M_2(\Gamma) &\to \B_2(\Gamma)\\
\{\alpha,\beta\} &\mapsto \beta-\alpha.
\end{align*}
The kernel of this map is the subspace $\S_2(\Gamma)$ of (weight 2) \emph{cuspidal modular symbols} for $\Gamma$. Then the restriction of $\varphi$ to the subspace of cuspidal modular symbols defines an isomorphism of free $\Z$-modules (see~\cite{manin:parabolic}*{Thm.~1.9})
\begin{equation}    \label{equation:modsym_hom}
\S_2(\Gamma)\overset\sim\to H_1(X(\Gamma),\Z).
\end{equation} 

Moreover, modular symbols are equipped with analogues of the standard operators on spaces of modular forms which we will denote with the same symbols. To see how the degeneracy maps are defined on modular symbols see~\cite{stein}*{\textsection 8.6}, while for the diamond and Hecke operators see~\cite{stein}*{\textsection 8.3}. Here we only recall the star involution $\iota^*$ that takes care of complex conjugation: it is defined on a modular symbol $\{\alpha,\beta\}\in\M_2(\Gamma)$ as $\iota^*(\{\alpha,\beta\})=\{-\alpha,-\beta\}$ (note that this is the negative version of the one presented in Stein's book).

Let $\Omega^{1}_{\hol}(X(\Gamma))$ be the $\C$-vector space of holomorphic 1-forms on $X(\Gamma)$. There is a canonical pairing 
\begin{equation}  \label{equation:pairing}
\Omega^1_{\hol}(X(\Gamma)) \times \M_2(\Gamma)  \longrightarrow  \C\\
\end{equation}
defined by integrating \(\omega\) along the geodesic representing the modular symbol \(\{\alpha,\beta\}\) 
\[
  \langle\,\omega,\{\alpha,\beta\}\rangle
  \;=\;
  \int_{\alpha}^{\beta}\omega,
  \quad
  \omega\in\Omega^1_{\mathrm{hol}}(X(\Gamma)),\;
  \{\alpha,\beta\}\in\mathcal{M}_2(\Gamma),
\]

The pairing is compatible with respect to the action of Hecke operators on modular symbols and modular forms, i.e.\ for each prime \(\ell\),
\(\langle T_\ell \omega,\{\alpha,\beta\}\rangle
= \langle \omega, T_\ell \{\alpha,\beta\}\rangle\)
for all \(\{\alpha,\beta\}\in\M_2(\Gamma)\)~\cite{stein}*{Thm.~1.42}), and becomes perfect and nondegenerate when restricted to $\S_2(\Gamma)$~\cite{merel}*{Section~1.5}. Furthermore, from~\eqref{equation:pairing} we have the composition
\begin{equation} \label{equation:pair_embedding}
\pi \colon H_1(X(\Gamma), \text{cusps}, \Z) \longrightarrow (\Omega^1_{\hol}(X(\Gamma))^* \longrightarrow H_1(X(\Gamma), \mathbb{R})  
\end{equation}
which has image in $H_1(X(\Gamma),\Q)$ by the Manin--Drinfeld Theorem, so it gives an embedding of the relative integral homology into the rational homology group.

\subsection{Proof of Theorem~\ref{theorem:CESholds}}    \label{subsection:proof}
We verify Conjectures~\ref{conj:ces} and the equality in~\eqref{equation:diff_cusps-rat_cusps_inclusion} for the open cases $p=97,101,109$ and $113$. We proved this by carrying out explicit computations in Magma, using modular symbols. The raw data can be found in Section~\ref{section:tables}. Here we outline the strategy of the proof.

Let $X = X_1(p)$ be the modular curve of prime level $p\in\{97,101,109,113\}$, and set
\[
  H \;=\; H_1\bigl(X(\C),\Z\bigr), 
  \qquad
  V \;=\; H \otimes_\Z \Q \;=\; H_1\bigl(X(\C),\Q\bigr).
\] 
Then 
\[
  J_1(p)(\C)_{\tors}
  \;\cong\;
  V \big/ H
  \;=\; M(p),
\]
so that modular symbols give a concrete model for the torsion subgroup of the Jacobian.

By Theorem~\ref{Tq_ker}, the image of the rational torsion subgroup is killed by every operator
\[
  \eta_q \;=\; T_q \;-\; \langle q\rangle \;-\; q
  \quad
  (\text{for }q\neq p,\;q\text{ odd})
  \quad\text{and by}\quad
  \iota^* - 1,
\]
where $\iota^*$ is induced by complex conjugation.  We can explicitly compute Hecke and diamond endomorphisms on $V$ that induce $\eta_q$ on $M(p)$, along with the star involution $\iota^*$. In Magma we therefore compute the joint kernel
\[
  M'(p)
  \;:=\;
  \bigcap_{\substack{q\neq p\\q\text{ odd}}}
    \ker\bigl(\eta_q,\,\iota^*-1\bigr)
  \;\subseteq\;
  M(p),
\]
truncating to a finite set of small primes $q$ (see Table~\ref{subsec:tablelist2}).  One then checks
\[
  J_1(p)(\Q)_{\tors}
  \;\subseteq\;
  M'(p),
\]
giving an upper bound on the rational torsion (Table~\ref{subsec:tableproof}).

We can also compute the cuspidal subgroup $\cuspidal(p)$ as the image in $M(p)$ of the relative homology $H_1(X(\C),\text{cusps},\Z)$ via its embedding~\eqref{equation:pair_embedding} into $H_1(X(\C),\Q)$. We obtain that 
\[M'(p)\subseteq \cuspidal(p).\]

Finally, we use the description of the Galois action  on the \((p-1)\) cusps via diamond operators to compute the subgroup of $\cuspidal(p):=\cuspidal_1(p)$ invariant under \(\Gal(\bar{\Q}/\Q)\): the \(\tfrac{p-1}2\)  \(\infty\)-cusps remain fixed, while the $0$-cusps are cyclically permuted. It follows that
\[J_1(p)(\Q)_{\tors} =  \cuspidal(p)^{\Gal_{\Q}}.\]

An independent Magma check shows that the subgroup \(\cuspidal(p)^{\Q}\) generated by differences of rational cusps  coincides with \(\cuspidal(p)^{\Gal_{\Q}}\) (compare the tables in~\ref{subsec:tableproof}). This completes the proof. \qed
%%%%%%%%%%%%%%%%%%%%%%%%%%%%%%
%booktables.tex
%%%%%%%%%%%%%%%%%%%%%%%%%%%%%%%%%%%%%%%%
\section{Tables}\label{section:tables}
%%%%%%%%%%%%%%%%%%%%%%%%%%%%%%%%%%%%%%%%%%%%%%%
\subsection{Tables for the proof of Theorem \ref{theorem:CESholds}}\label{subsec:tableproof}

\noindent The tables below contain the invariants of the bounding group $M'(p)$, the cuspidal group \( \cuspidal(p) \), the rational cuspidal group \(\cuspidal(p)^{\Gal_{\Q}} \) and the subgroup $\cuspidal(p)^{\Q}$ generated by differences of rational cusps computed with Magma. Observe that $\cuspidal(p)^{\Gal_{\Q}}  =\cuspidal(p)^{\Q}$ as we expected.  By \emph{invariants} we mean the elementary divisors in the decomposition of a finite abelian group into a direct product of cyclic subgroups.  In the tables below, an entry \[G=\bigl[2 \mid 2^2 \mid 2^2 \cd 5^2 \cd 7 \bigr], \text{ denotes that }G\cong\frac{\Z}{2\Z} \times \frac{\Z}{2^2 \Z} \times \frac{\Z}{2^2 \cd 5^2 \cd 7 \Z}.\]  

\begin{longtable}{c p{10cm}}
  \caption{Table for \(p = 97\)}\label{tab:p97}\\
  \toprule
  \textbf{Groups} & \textbf{Invariants} \\
  \midrule
  \endfirsthead

  \multicolumn{2}{@{}l}{\small\slshape Continued from previous page}\\
  \midrule
  \textbf{Groups} & \textbf{Invariants} \\
  \midrule
  \endhead

  \midrule
  \multicolumn{2}{@{}r}{\small\slshape Continued on next page}\\
  \endfoot

  \bottomrule
  \endlastfoot

  \(M'(97)\)
    & \(\bigl[2\cdot5\cdot7 \mid 2^{4}\cdot5\cdot7\cdot17\cdot149\cdot241\cdot367\cdot421\cdot2753\cdot147689\cdot651997\cdot21205889\cdot41481169\cdot5429704177\cdot2758053952369\bigr]\) \\
  \midrule
  \(\cuspidal(97)\)
    & \(\bigl[5\cdot7 \mid 5\cdot7 \mid 2^{2}\cdot5\cdot7\cdot17\cdot149\cdot241\cdot367\cdot421\cdot2753\cdot147689\cdot651997\cdot21205889\cdot41481169\cdot5429704177\cdot2758053952369 \mid 2^{6}\cdot5\cdot7\cdot17\cdot149\cdot241\cdot367\cdot421\cdot2753\cdot147689\cdot651997\cdot21205889\cdot41481169\cdot5429704177\cdot2758053952369\bigr]\) \\
  \midrule
  \(\cuspidal(97)^{\Q}\)
    & \(\bigl[5\cdot7 \mid 2^{4}\cdot5\cdot7\cdot17\cdot149\cdot241\cdot367\cdot421\cdot2753\cdot147689\cdot651997\cdot21205889\cdot41481169\cdot5429704177\cdot2758053952369\bigr]\) \\
  \midrule
  \(\cuspidal(97)^{\Gal_{\Q}}\)
    & \(\bigl[5\cdot7 \mid 2^{4}\cdot5\cdot7\cdot17\cdot149\cdot241\cdot367\cdot421\cdot2753\cdot147689\cdot651997\cdot21205889\cdot41481169\cdot5429704177\cdot2758053952369\bigr]\) \\

  \midrule
  Total execution time & \(93705.919\,\mathrm{s}\approx26\,\mathrm{h}\)
\end{longtable}

\begin{longtable}{c p{10cm}}
  \caption{Table for \(p = 101\)}\label{tab:p101}\\
  \toprule
  \textbf{Groups} & \textbf{Invariants} \\
  \midrule
  \endfirsthead

  \multicolumn{2}{@{}l}{\small\slshape Continued from previous page}\\
  \midrule
  \textbf{Groups} & \textbf{Invariants} \\
  \midrule
  \endhead

  \midrule
  \multicolumn{2}{@{}r}{\small\slshape Continued on next page}\\
  \endfoot

  \bottomrule
  \endlastfoot

  \(M'(101)\)
    & \(\bigl[5^{2}\cdot19\cdot101\cdot1201\cdot52951\cdot54371\cdot599491\cdot1493651\cdot12355051\cdot709068505801\cdot58884077243434864347851\bigr]\) \\
  \midrule
  \(\cuspidal(101)\)
    & \(\bigl[19\cdot101\cdot1201\cdot52951\cdot54371\cdot599491\cdot1493651\cdot12355051\cdot709068505801\cdot58884077243434864347851 \mid 5^{4}\cdot19\cdot101\cdot1201\cdot52951\cdot54371\cdot599491\cdot1493651\cdot12355051\cdot709068505801\cdot58884077243434864347851\bigr]\) \\
  \midrule
  \(\cuspidal(101)^{\Q}\)
    & \(\bigl[5^{2}\cdot19\cdot101\cdot1201\cdot52951\cdot54371\cdot599491\cdot1493651\cdot12355051\cdot709068505801\cdot58884077243434864347851\bigr]\) \\
  \midrule
  \(\cuspidal(101)^{\Gal_{\Q}}\)
    & \(\bigl[5^{2}\cdot19\cdot101\cdot1201\cdot52951\cdot54371\cdot599491\cdot1493651\cdot12355051\cdot709068505801\cdot58884077243434864347851\bigr]\) \\

  \midrule
  Total execution time & \(279276.410\,\mathrm{s}\approx77\text{--}78\,\mathrm{h}\)
\end{longtable}

\begin{longtable}{c p{10cm}}
  \caption{Table for \(p = 109\)}\label{tab:p109}\\
  \toprule
  \textbf{Groups} & \textbf{Invariants} \\
  \midrule
  \endfirsthead

  \multicolumn{2}{@{}l}{\small\slshape Continued from previous page}\\
  \midrule
  \textbf{Groups} & \textbf{Invariants} \\
  \midrule
  \endhead

  \midrule
  \multicolumn{2}{@{}r}{\small\slshape Continued on next page}\\
  \endfoot

  \bottomrule
  \endlastfoot

  \(M'(109)\)
    & \(\bigl[2 \mid 2\cdot3^{3} \mid 2^{2}\cdot3^{3}\cdot37\cdot127 \mid 2^{2}\cdot3^{6}\cdot37\cdot103\cdot127\cdot3187\cdot22483\cdot129763\cdot2230759\cdot144218626120352809\cdot7225241488211218811391927451\bigr]\) \\
  \midrule
  \(\cuspidal(109)\)
    & \(\bigl[2^{2}\cdot3\cdot37\cdot127 \mid 2^{2}\cdot3^{5}\cdot37\cdot127 \mid 2^{2}\cdot3^{6}\cdot37\cdot103\cdot127\cdot3187\cdot22483\cdot129763\cdot2230759\cdot144218626120352809\cdot7225241488211218811391927451 \ |\ 2^{2}\cdot3^{6}\cdot37\cdot103\cdot127\cdot3187\cdot22483\cdot129763\cdot2230759\cdot144218626120352809\cdot7225241488211218811391927451\bigr]\) \\
  \midrule
  \(\cuspidal(109)^{\Q}\)
    & \(\bigl[2^{2}\cdot3^{3}\cdot37\cdot127 \mid 2^{2}\cdot3^{6}\cdot37\cdot103\cdot127\cdot3187\cdot22483\cdot129763\cdot2230759\cdot144218626120352809\cdot7225241488211218811391927451\bigr]\) \\
  \midrule
  \(\cuspidal(109)^{\Gal_{\Q}}\)
    & \(\bigl[2^{2}\cdot3^{3}\cdot37\cdot127 \mid 2^{2}\cdot3^{6}\cdot37\cdot103\cdot127\cdot3187\cdot22483\cdot129763\cdot2230759\cdot144218626120352809\cdot7225241488211218811391927451\bigr]\) \\

  \midrule
  Total execution time & \(269108.450\,\mathrm{s}\approx74\text{--}75\,\mathrm{h}\)
\end{longtable}

\begin{longtable}{c p{10cm}}
  \caption{Table for \(p = 113\)}\label{tab:p113}\\
  \toprule
  \textbf{Groups} & \textbf{Invariants} \\
  \midrule
  \endfirsthead

  \multicolumn{2}{@{}l}{\small\slshape Continued from previous page}\\
  \midrule
  \textbf{Groups} & \textbf{Invariants} \\
  \midrule
  \endhead

  \midrule
  \multicolumn{2}{@{}r}{\small\slshape Continued on next page}\\
  \endfoot

  \bottomrule
  \endlastfoot

  \(M'(113)\)
    & \(\bigl[2 \mid 2 \mid 2 \mid 2 \mid 2 \mid 2 \mid 2^{2} \mid 2^{2} \mid 2^{4} \mid 2^{4} \mid 2^{4}\cdot13 \mid 2^{4}\cdot3^{2}\cdot5\cdot7\cdot13\cdot41\cdot1597\cdot2689\cdot5419\cdot7393\cdot33181\cdot47609\cdot83685281\cdot1338273009109\cdot3747533743340403014797054313\bigr]\) \\
  \midrule
  \(\cuspidal(113)\)
    & \(\bigl[2^{2} \mid 2^{2} \mid 2^{2} \mid 2^{2} \mid 2^{4} \mid 2^{4} \mid 2^{4} \mid 2^{4} \mid 2^{4}\cdot13 \mid 2^{4}\cdot13 \mid 2^{4}\cdot3^{2}\cdot5\cdot13\cdot41\cdot1597\cdot2689\cdot5419\cdot7393\cdot33181\cdot47609\cdot83685281\cdot1338273009109\cdot3747533743340403014797054313 \mid 2^{4}\cdot3^{2}\cdot5\cdot7^{2}\cdot13\cdot41\cdot1597\cdot2689\cdot5419\cdot7393\cdot33181\cdot47609\cdot83685281\cdot1338273009109\cdot3747533743340403014797054313\bigr]\) \\
  \midrule
  \(\cuspidal(113)^{\Q}\)
    & \(\bigl[2^{2} \mid 2^{2} \mid 2^{4} \mid 2^{4} \mid 2^{4}\cdot13 \mid 2^{4}\cdot3^{2}\cdot5\cdot7\cdot13\cdot41\cdot1597\cdot2689\cdot5419\cdot7393\cdot33181\cdot47609\cdot83685281\cdot1338273009109\cdot3747533743340403014797054313\bigr]\) \\
  \midrule
  \(\cuspidal(113)^{\Gal_{\Q}}\)
    & \(\bigl[2^{2} \mid 2^{2} \mid 2^{4} \mid 2^{4} \mid 2^{4}\cdot13 \mid 2^{4}\cdot3^{2}\cdot5\cdot7\cdot13\cdot41\cdot1597\cdot2689\cdot5419\cdot7393\cdot33181\cdot47609\cdot83685281\cdot1338273009109\cdot3747533743340403014797054313\bigr]\) \\

  \midrule
  Total execution time & \(382415.860\,\mathrm{s}\approx106\,\mathrm{h}\)
\end{longtable}

\subsection{Table with the rational torsion subgroups of \texorpdfstring{$J_1(p)$}{J1(N)}}  \label{subsec:tablelist1}
In the following table we provide the explicit structure of the rational torsion subgroup of $J_1(p)$ for $5\le p\le 113$, found with the same method as described in Section~\ref{subsection:proof}.

\begin{longtable}{c p{12cm}}
  \caption{Rational torsion subgroup of \(J_1(p)\)}\label{tab:J1tors}\\
  \toprule
  \textbf{Prime \(p\)} & \textbf{\(J_1(p)(\Q)_{\tors}\)} \\
  \midrule
  \endfirsthead

  \multicolumn{2}{@{}l}{\small\slshape Continued from previous page}\\
  \midrule
  \textbf{Prime \(p\)} & \textbf{\(J_1(p)(\Q)_{\tors}\)} \\
  \midrule
  \endhead

  \midrule
  \multicolumn{2}{@{}r}{\small\slshape Continued on next page}\\
  \endfoot

  \bottomrule
  \endlastfoot

  5   & \(\mathrm{trivial}\) \\
  \midrule
  7   & \(\mathrm{trivial}\) \\
  \midrule
  11  & \(\bigl[5\bigr]\) \\
  \midrule
  13  & \(\bigl[19\bigr]\) \\
  \midrule
  17  & \(\bigl[2^{3}\cdot73\bigr]\) \\
  \midrule
  19  & \(\bigl[3^{2}\cdot487\bigr]\) \\
  \midrule
  23  & \(\bigl[11\cdot37181\bigr]\) \\
  \midrule
  29  & \(\bigl[2^{2} \mid 2^{2} \mid 2^{2} \cdot 3\cdot7\cdot43\cdot17837\bigr]\) \\
  \midrule
  31  & \(\bigl[2\cdot5 \mid 2\cdot5\cdot7\cdot11\cdot2302381\bigr]\) \\
  \midrule
  37  & \(\bigl[3^{2}\cdot5\cdot7\cdot19\cdot37\cdot73\cdot577\cdot17209\bigr]\) \\
  \midrule
  41  & \(\bigl[2^{4}\cdot5\cdot13\cdot31^{2}\cdot431\cdot250183721\bigr]\) \\
  \midrule
  43  & \(\bigl[2 \mid 2\cdot7\cdot19\cdot29\cdot463\cdot1051\cdot416532733\bigr]\) \\
  \midrule
  47  & \(\bigl[23\cdot139\cdot82397087\cdot12451196833\bigr]\) \\
  \midrule
  53  & \(\bigl[7\cdot13\cdot85411\cdot96331\cdot379549\cdot641949283\bigr]\) \\
  \midrule
  59  & \(\bigl[29\cdot59\cdot9988553613691393812358794271\bigr]\) \\
  \midrule
  61  & \(\bigl[7\cdot11 \mid 5\cdot7\cdot11\cdot19\cdot31\cdot2081\cdot2801\cdot40231\cdot411241\cdot514216621\bigr]\) \\
  \midrule 
  67  & \(\bigl[661 \mid 11\cdot67\cdot193\cdot661\cdot2861\cdot8009\cdot11287\cdot9383200455691459\bigr]\) \\
  \midrule
  71  & \(\bigl[701 \mid 5\cdot7\cdot31\cdot113\cdot211\cdot281\cdot701\cdot12713\cdot13070849919225655729061\bigr]\) \\
  \midrule
  73  & \(\bigl[2 \mid 2 \mid 2\cdot3^{2}\cdot11\cdot79\cdot89\cdot241\cdot23917\cdot3341773\cdot11596933\cdot31964959893317833\bigr]\) \\
  \midrule
  79  & \(\bigl[521 \mid 13\cdot157\cdot199\cdot521\cdot1249\cdot4447\cdot323623\cdot1130429\cdot68438648614508149381\bigr]\) \\
  \midrule
  83  & \(\bigl[41\cdot17210653\cdot151251379\cdot18934761332741\cdot48833370476331324749419\bigr]\) \\
  \midrule
  89  & \(\bigl[2 \mid 2 \mid 2\cdot5\cdot11\cdot13\cdot37\cdot397\cdot4027\cdot262504573\cdot15354699728897\cdot49135060828995551670374357\bigr]\) \\
  \midrule
  97  & \(\bigl[5\cdot7 \mid 2^{4}\cdot5\cdot7\cdot17\cdot149\cdot241\cdot367\cdot421\cdot2753\cdot147689\cdot651997\cdot21205889\cdot41481169\cdot5429704177\cdot2758053952369\bigr]\) \\
  \midrule
  101 & \(\bigl[5^{2}\cdot19\cdot101\cdot1201\cdot52951\cdot54371\cdot599491\cdot1493651\cdot12355051\cdot709068505801\cdot58884077243434864347851\bigr]\) \\
  \midrule
  103 & \(\bigl[17 \mid 7^{2}\cdot13\cdot17\cdot103\cdot613\cdot100458793666879\cdot123953701101455911613\cdot60417254667158883466061055469\bigr]\) \\
  \midrule
  107 & \(\bigl[53\cdot304009\cdot1598587\cdot7762787405087851\cdot1827219997313025527\cdot340411510885100431606787699221\bigr]\) \\
  \midrule
  109 & \(\bigl[2^{2}\cdot3^{3}\cdot37\cdot127 \mid 2^{2}\cdot3^{6}\cdot37\cdot103\cdot127\cdot3187\cdot22483\cdot129763\cdot2230759\cdot144218626120352809\cdot7225241488211218811391927451\bigr]\) \\
  \midrule
  113 & \(\bigl[2^{2} \mid 2^{2} \mid 2^{4} \mid 2^{4} \mid 2^{4}\cdot13 \mid 2^{4}\cdot3^{2}\cdot5\cdot7\cdot13\cdot41\cdot1597\cdot2689\cdot5419\cdot7393\cdot33181\cdot47609\cdot83685281\cdot1338273009109\cdot3747533743340403014797054313\bigr]\) \\
\end{longtable}

\begin{longtable}{ c  c  c  c }
  \caption{Here, \(T_q\) denotes the Hecke operator used to obtain the upper bound, \(\langle d\rangle\) is the diamond operator describing the Galois action, and the final column records the program’s execution time.}\label{subsec:tablelist2}\\
  \toprule
  \textbf{Prime \(p\)} & \textbf{Hecke Operator \(T_q\)} & \textbf{Galois Action \(\langle d\rangle\)} & \textbf{Time (s)} \\
  \midrule
  \endfirsthead

  \multicolumn{4}{@{}l}{\small\slshape Continued from previous page}\\
  \midrule
  \textbf{Prime \(p\)} & \textbf{Hecke Operator \(T_q\)} & \textbf{Galois Action \(\langle d\rangle\)} & \textbf{Time (s)} \\
  \midrule
  \endhead

  \midrule
  \multicolumn{4}{@{}r}{\small\slshape Continued on next page}\\
  \endfoot

  \bottomrule
  \endlastfoot

  11  & \(T_{3}\)   & \(\langle2\rangle\) & 0.160    \\
  \midrule
  13  & \(T_{3}\)   & \(\langle2\rangle\) & 0.210    \\
  \midrule
  17  & \(T_{3}\)   & \(\langle3\rangle\) & 0.660    \\
  \midrule
  19  & \(T_{3}\)   & \(\langle2\rangle\) & 1.190    \\
  \midrule
  23  & \(T_{3}\)   & \(\langle5\rangle\) & 3.230    \\
  \midrule
  29  & \(T_{3}\)   & \(\langle2\rangle\) & 16.170   \\
  \midrule
  31  & \(T_{3}\)   & \(\langle3\rangle\) & 14.710   \\
  \midrule
  37  & \(T_{3}\)   & \(\langle2\rangle\) & 51.560   \\
  \midrule
  41  & \(T_{3}\)   & \(\langle6\rangle\) & 109.260  \\
  \midrule
  43  & \(T_{11}\)  & \(\langle3\rangle\) & 217.510  \\
  \midrule
  47  & \(T_{3}\)   & \(\langle5\rangle\) & 287.510  \\
  \midrule
  53  & \(T_{3}\)   & \(\langle2\rangle\) & 705.280  \\
  \midrule
  59  & \(T_{3}\)   & \(\langle2\rangle\) & 1585.410 \\
  \midrule
  61  & \(T_{11}\)  & \(\langle2\rangle\) & 3226.790 \\
  \midrule
  67  & \(T_{17}\)  & \(\langle2\rangle\) & 16145.930\\
  \midrule
  71  & \(T_{3}\)   & \(\langle7\rangle\) & 5986.440 \\
  \midrule
  73  & \(T_{3}\)   & \(\langle5\rangle\) & 8313.790 \\
  \midrule
  79  & \(T_{3}\)   & \(\langle3\rangle\) & 15867.310\\
  \midrule
  83  & \(T_{3}\)   & \(\langle2\rangle\) & 24183.350\\
  \midrule
  89  & \(T_{5}\)   & \(\langle3\rangle\) & 46008.870\\
  \midrule
  97  & \(T_{3}\)   & \(\langle5\rangle\) & 93705.620\\
  \midrule
  101 & \(T_{11}\)  & \(\langle2\rangle\) & 279276.410\\
  \midrule
  103 & \(T_{3}\)   & \(\langle5\rangle\) & 156711.890\\
  \midrule
  107 & \(T_{3}\)   & \(\langle2\rangle\) & 219285.870\\
  \midrule
  109 & \(T_{3}\)   & \(\langle6\rangle\) & 269108.450\\
  \midrule
  113 & \(T_{5}\)   & \(\langle3\rangle\) & 382415.860\\

\end{longtable}

%%%%%%%%%%%%%%%%%%%%%%%%%%%%%%%%%%%%%%%%%%%%%%%%%%%%%%%%%%%%%%%%%%%%%%%%%%%%%%%%
%%%%%%%%%%%%%%%%%%				Bibliography			%%%%%%%%%%%%%%%%%%%%%%%%
%%%%%%%%%%%%%%%%%%%%%%%%%%%%%%%%%%%%%%%%%%%%%%%%%%%%%%%%%%%%%%%%%%%%%%%%%%%%%%%%
\begin{bibdiv}
    \begin{biblist} % Be Carefull of the order

\bib{birch}{article}{
   author={Birch, Bryan},
   title={Noncongruence subgroups, covers and drawings},
   conference={
      title={The Grothendieck theory of dessins d'enfants},
      address={Luminy},
      date={1993},
   },
   book={
      series={London Math. Soc. Lecture Note Ser.},
      volume={200},
      publisher={Cambridge Univ. Press, Cambridge},
   },
   isbn={0-521-47821-9},
   date={1994},
   pages={25--46},
   review={\MR{1305392}},
}

\bib{magma}{article}{
   author={Bosma, Wieb},
   author={Cannon, John},
   author={Playoust, Catherine},
   title={The Magma algebra system. I. The user language},
   note={Computational algebra and number theory (London, 1993)},
   journal={J. Symbolic Comput.},
   volume={24},
   date={1997},
   number={3-4},
   pages={235--265},
   issn={0747-7171},
   review={\MR{1484478}},
   doi={10.1006/jsco.1996.0125},
}

\bib{CES}{article}{
   author={Conrad, Brian},
   author={Edixhoven, Bas},
   author={Stein, William},
   title={$J_1(p)$ has connected fibers},
   journal={Doc. Math.},
   volume={8},
   date={2003},
   pages={331--408},
   issn={1431-0635},
   review={\MR{2029169}},
}

\bib{DerEtal}{article}{
   author={Derickx, Maarten},
   author={Etropolski, Anastassia},
   author={van Hoeij, Mark},
   author={Morrow, Jackson S.},
   author={Zureick-Brown, David},
   title={Sporadic cubic torsion},
   journal={Algebra Number Theory},
   volume={15},
   date={2021},
   number={7},
   pages={1837--1864},
   issn={1937-0652},
   review={\MR{4333666}},
   doi={10.2140/ant.2021.15.1837},
}

\bib{diamond-im}{article}{
   author={Diamond, Fred},
   author={Im, John},
   title={Modular forms and modular curves},
   conference={
      title={Seminar on Fermat's Last Theorem},
      address={Toronto, ON},
      date={1993--1994},
   },
   book={
      series={CMS Conf. Proc.},
      volume={17},
      publisher={Amer. Math. Soc., Providence, RI},
   },
   isbn={0-8218-0313-1},
   date={1995},
   pages={39--133},
   review={\MR{1357209}},
}

\bib{DKKS}{article}{
   author={Derickx, Maarten},
   author={Kamienny, Sheldon},
   author={Stein, William},
   author={Stoll, Michael},
   title={Torsion points on elliptic curves over number fields of small
   degree},
   journal={Algebra Number Theory},
   volume={17},
   date={2023},
   number={2},
   pages={267--308},
   issn={1937-0652},
   review={\MR{4564759}},
   doi={10.2140/ant.2023.17.267},
}

\bib{DeLeo}{thesis}{
   author={De Leo, Davide},
   title={On Some Open Cases of a Conjecture of Conrad, Edixhoven and Stein},
   date={2024},
   organization={Università della Calabria},
}

\bib{code}{misc}{
   author={De Leo, Davide},
   author={Stoll, Michael},
   title={Magma code verifying the computational claims in this paper},
   date={2025},
   note={Available at \url{https://github.com/DavideDeLeo/magma-codes/tree/a0caf874cf14448ce79fac43d1d824b5974e5640/CES-conjecture-computations/paper-DS25}},
}

\bib{drinfeld}{article}{
   author={Drinfel'd, V. G.},  % more correct with \cprime, but it doesn't compile 
   title={Two theorems on modular curves},
   language={Russian},
   journal={Funkcional. Anal. i Prilo\v zen.},
   volume={7},
   date={1973},
   number={2},
   pages={83--84},
   issn={0374-1990},
   review={\MR{0318157}},
}

\bib{lorenzini}{article}{
   author={Lorenzini, Dino J.},
   title={Torsion points on the modular Jacobian $J_0(N)$},
   journal={Compositio Math.},
   volume={96},
   date={1995},
   number={2},
   pages={149--172},
   issn={0010-437X},
   review={\MR{1326710}},
}

\bib{manin:parabolic}{article}{
   author={Manin, Ju.\ I.},
   title={Parabolic points and zeta functions of modular curves},
   language={Russian},
   journal={Izv. Akad. Nauk SSSR Ser. Mat.},
   volume={36},
   date={1972},
   pages={19--66},
   issn={0373-2436},
   review={\MR{0314846}},
}

\bib{mazur:eisenstein}{article}{
   author={Mazur, B.},
   title={Modular curves and the Eisenstein ideal},
   note={With an appendix by Mazur and M. Rapoport},
   journal={Inst. Hautes \'Etudes Sci. Publ. Math.},
   number={47},
   date={1977},
   pages={33--186 (1978)},
   issn={0073-8301},
   review={\MR{0488287}},
}

\bib{merel}{article}{
   author={Merel, Lo\"ic},
   title={Bornes pour la torsion des courbes elliptiques sur les corps de
   nombres},
   language={French},
   journal={Invent. Math.},
   volume={124},
   date={1996},
   number={1-3},
   pages={437--449},
   issn={0020-9910},
   review={\MR{1369424}},
   doi={10.1007/s002220050059},
}

\bib{ogg1}{article}{
   author={Ogg, A. P.},
   title={Rational points on certain elliptic modular curves},
   conference={
      title={Analytic number theory},
      address={Proc. Sympos. Pure Math., Vol. XXIV, St. Louis Univ., St.
      Louis, Mo.},
      date={1972},
   },
   book={
      series={Proc. Sympos. Pure Math.},
      volume={Vol. XXIV},
      publisher={Amer. Math. Soc., Providence, RI},
   },
   date={1973},
   pages={221--231},
   review={\MR{0337974}},
}

\bib{ogg2}{article}{
   author={Ogg, A. P.},
   title={Diophantine equations and modular forms},
   journal={Bull. Amer. Math. Soc.},
   volume={81},
   date={1975},
   pages={14--27},
   issn={0002-9904},
   review={\MR{0354675}},
   doi={10.1090/S0002-9904-1975-13623-8},
}

\bib{Ohta1}{article}{
   author={Ohta, Masami},
   title={Eisenstein ideals and the rational torsion subgroups of modular
   Jacobian varieties},
   journal={J. Math. Soc. Japan},
   volume={65},
   date={2013},
   number={3},
   pages={733--772},
   issn={0025-5645},
   review={\MR{3084978}},
}

\bib{ohta2}{article}{
   author={Ohta, Masami},
   title={Eisenstein ideals and the rational torsion subgroups of modular
   Jacobian varieties II},
   journal={Tokyo J. Math.},
   volume={37},
   date={2014},
   number={2},
   pages={273--318},
   issn={0387-3870},
   review={\MR{3304683}},
   doi={10.3836/tjm/1422452795},
}

\bib{parent00}{article}{
   author={Parent, Pierre},
   title={Torsion des courbes elliptiques sur les corps cubiques},
   language={French, with English and French summaries},
   journal={Ann. Inst. Fourier (Grenoble)},
   volume={50},
   date={2000},
   number={3},
   pages={723--749},
   issn={0373-0956},
   review={\MR{1779891}},
   doi={10.5802/aif.1770},
}

\bib{ribet:wake}{article}{
   author={Ribet, Kenneth A.},
   author={Wake, Preston},
   title={Another look at rational torsion of modular Jacobians},
   journal={Proc. Natl. Acad. Sci. USA},
   volume={119},
   date={2022},
   number={41},
   pages={Paper No. e2210032119, 8},
   issn={0027-8424},
   review={\MR{4512691}},
}

\bib{shimura}{book}{
   author={Shimura, Goro},
   title={Introduction to the arithmetic theory of automorphic functions},
   series={Kan\^o{} Memorial Lectures},
   volume={No. 1},
   note={Publications of the Mathematical Society of Japan, No. 11},
   publisher={Iwanami Shoten Publishers, Tokyo; Princeton University Press,
   Princeton, NJ},
   date={1971},
   pages={xiv+267},
   review={\MR{0314766}},
}

\bib{Sil:AEC}{book}{
   author={Silverman, Joseph H.},
   title={The arithmetic of elliptic curves},
   series={Graduate Texts in Mathematics},
   volume={106},
   publisher={Springer-Verlag, New York},
   date={1986},
   pages={xii+400},
   isbn={0-387-96203-4},
   review={\MR{0817210}},
   doi={10.1007/978-1-4757-1920-8},
}

\bib{stein}{book}{
   author={Stein, William},
   title={Modular forms, a computational approach},
   series={Graduate Studies in Mathematics},
   volume={79},
   note={With an appendix by Paul E. Gunnells},
   publisher={American Mathematical Society, Providence, RI},
   date={2007},
   pages={xvi+268},
   isbn={978-0-8218-3960-7},
   isbn={0-8218-3960-8},
   review={\MR{2289048}},
   doi={10.1090/gsm/079},
}

\bib{stevens}{book}{
   author={Stevens, Glenn},
   title={Arithmetic on modular curves},
   series={Progress in Mathematics},
   volume={20},
   publisher={Birkh\"auser Boston, Inc., Boston, MA},
   date={1982},
   pages={xvii+214},
   isbn={3-7643-3088-0},
   review={\MR{0670070}},
}

\bib{Yoo1}{article}{
   author={Yoo, Hwajong},
   title={The rational cuspidal divisor class group of $X_0(N)$},
   journal={J. Number Theory},
   volume={242},
   date={2023},
   pages={278--401},
   issn={0022-314X},
   review={\MR{4490453}},
   doi={10.1016/j.jnt.2022.04.009},
}

\bib{Yoo2}{article}{
   author={Yoo, Hwajong},
   title={The rational cuspidal divisor class group of $X_0(N)$},
   journal={J. Number Theory},
   volume={242},
   date={2023},
   pages={278--401},
   issn={0022-314X},
   review={\MR{4490453}},
   doi={10.1016/j.jnt.2022.04.009},
}

    \end{biblist}
\end{bibdiv}

\end{document}